\documentclass[11pt,letterpaper]{amsart}
\usepackage[top=1.15in,bottom=1.15in,left=1.15in,right=1.15in,marginpar=1in]{geometry}
\geometry{letterpaper}                   
\usepackage{subcaption}
\usepackage{graphicx}
\usepackage{amssymb}
\usepackage{epstopdf}
\usepackage[mathcal]{eucal}
\usepackage{mathtools}  
\usepackage{pinlabel}
\usepackage{hyperref}
\usepackage{enumitem}
\usepackage{changepage}
\usepackage[all,cmtip]{xy}
\usepackage[textsize=tiny]{todonotes}
\usepackage{tikz-cd}
\usetikzlibrary{decorations.pathreplacing}
\usetikzlibrary{shapes.geometric, arrows}
\usepackage{scalerel}

\DeclareGraphicsRule{.tif}{png}{.png}{`convert #1 `dirname #1`/`basename #1 .tif`.png}

\makeatletter
\newcommand*{\bigcdot}{}
\DeclareRobustCommand*{\bigcdot}{%
  \mathbin{\mathpalette\bigcdot@{}}%
}
\newcommand*{\bigcdot@scalefactor}{.75}
\newcommand*{\bigcdot@widthfactor}{1.15}
\newcommand*{\bigcdot@}[2]{%
  \sbox0{$#1\vcenter{}$}
  \sbox2{$#1\cdot\m@th$}%
  \hbox to \bigcdot@widthfactor\wd2{%
    \hfil
    \raise\ht0\hbox{%
      \scalebox{\bigcdot@scalefactor}{%
        \lower\ht0\hbox{$#1\bullet\m@th$}%
      }%
    }%
    \hfil
  }%
}
\makeatother
\newcommand{\nc}{\newcommand}
\nc{\dmo}{\DeclareMathOperator}
\nc{\nt}{\newtheorem}

\nt{theorem}{Theorem}[section]
\nt{proposition}[theorem]{Proposition}
\nt{corollary}[theorem]{Corollary}
\nt{move}{Move}
\nt{lemma}[theorem]{Lemma}
\nt*{tpdp}{Topological polynomial decision problem}

\theoremstyle{definition}
\nt{function}{Function}

\newcounter{step}
\newenvironment{step}[1][]{{\refstepcounter{step}\par
   \noindent \textbf{Step~\thestep : #1.} \rmfamily}\\}{\bigskip} 
   \newcommand{\steplabel}[1]{\addtocounter{step}{-1}\refstepcounter{step}\label{#1}}
   \newcounter{substep}
\counterwithin{substep}{step}

\setcounter{topnumber}{1}     
\setcounter{bottomnumber}{1}  
\setcounter{totalnumber}{2}   

\let\ORIincludegraphics\includegraphics
\renewcommand{\includegraphics}[2][]{\ORIincludegraphics[scale=0.91666,#1]{#2}}

\dmo{\Mod}{Mod}
\dmo{\Teich}{Teich}
\dmo{\PMod}{PMod}
\dmo{\LMod}{LMod}
\dmo{\Homeo}{Homeo}
\dmo{\Aut}{Aut}
\dmo{\Sq}{Sq}
\dmo{\Cub}{Cub}
\dmo{\Rot}{Rot}

\nc{\C}{\mathbb C}
\nc{\Z}{\mathbb Z}
\nc{\Q}{\mathbb Q}
\nc{\N}{\mathcal N}
\nc{\M}{\mathcal M}
\nc{\R}{\mathbb R}
\nc{\A}{\mathcal A}
\nc{\Y}{\mathcal Y}
\nc{\T}{\mathcal T}

\definecolor{cerulean}{RGB}{0, 188, 255}
\definecolor{dustypink}{RGB}{224, 157, 205}

\nc{\p}[1]{\medskip\noindent\emph{#1.}}


%

\title[Transitivity of Pure Hurwitz]{Transitivity of the pure Hurwitz classes of quadratic post-critically finite polynomials}
\author{Yvon Verberne, Rebecca R. Winarski }
\date{February 2023}

\begin{document}

\maketitle
\begin{abstract}
    We prove that for two post-critically finite quadratic polynomials $g,h$, there is a mapping class $\phi$ of the sphere with finitely many marked points such that $g\phi$ and $h$ are pure Hurwitz equivalent.
\end{abstract}
\section{Introduction}
The study of the dynamics of post-critically finite rational maps dates back to the work of Fatou and Julia in the 1910s and has gained traction since the work of Thurston in the 1980s.  Moreover, the pure mapping class group acts on the set of post-critically finite branched covers with a given post-critical set and preservers the branching data of the maps.  This action has been of considerable interest by Bartholdi--Dudko, Bartholdi--Nekrashevych, Pilgrim, Kelsey--Lodge, D. Thurston, Koch--Pilgrim--Selinger (see \cite{BD,BD2, BaNe, FPP2,FPP,KL,KPS,lodge,pilgrim_alg}, for example), and connects holomorphic dynamics, geometric group theory and surface topology.  On the other hand, impure mapping class groups also act on the set of post-critically finite branched covers, but may change the combinatorics of the post-critical set of the branched cover.  This action is a priori richer, but requires an explicit understanding of how impure mapping classes change the combinatorics of branching data.  In this paper, we make a first step in this direction, showing that for any two pure Hurwitz classes -- a combinatorial description of the dynamics of the branched cover -- there is an impure mapping class between representatives of the classes in the quadratic case.

Let $f:S^2\rightarrow S^2$ be a branched cover.  Let $C\subset S^2$ be the set of ramified points of $f$, that is, the set of points for which $f$ is not locally injective.  We will call the set of points $C$ the {\it critical points} of $f$, matching the terminology for holomorphic maps.  The {\it post-critical set} of $f$ is the set $P=\displaystyle\bigcup_{n\geq 1}f^n(C)$.  The map $f$ is said to be {\it post-critically finite} if $|P|<\infty$.  In this paper, we focus on marked branched covers, that is, a pair consisting of a branched cover $f:S^2\rightarrow S^2$ and a (finite) set of points $M$, such that $f(M)\subset M$.

Let $f$ be a marked post-critically finite branched cover with marked set $M_f$ such that $M_f$ contains the critical points and the post-critical set of $f$.  There is a finite directed graph that describes the orbit of each of the critical points of $f$.  A {\it portrait} or dynamical portrait for $f$ is a labeled directed graph in which the vertices correspond to points in $M_f$. The vertices corresponding to critical points will be denoted by $\ast$. Let $p_1,p_2$ be elements of $M$.  Let $v_i$ be the vertex of the portrait of $f$ corresponding to $p_i$.  There is an edge of the portrait from $v_1$ to $v_2$ if $f(p_1)=p_2$.  The edge will be labeled with the local degree of $f$ at $p_1$.

\p{Combinatorial and portrait equivalence} In the study of holomorphic dynamics, post-critically finite marked branched covers are often considered up to combinatorial (or Thurston) equivalence.  That is: let $g$ and $h$ be post-critically finite marked branched covers with marked sets $M_g$ and $M_h$, respectively.  We say that $g$ and $h$ are {\bf combinatorially equivalent} if there exist orientation preserving homeomorphisms $\psi_1,\psi_2$ such that:
\begin{enumerate}
\item $\psi_1g=h\psi_2$,
\item $\psi_i(M_g)=M_h$ for $i\in\{1,2\}$, and 
\item $\psi_1$ and $\psi_2$ are homotopic relative to $M_g$.
\end{enumerate}
The branched covers $g,h$ are {\it pure Hurwitz equivalent} if the first two conditions above are satified and $\psi_0(p)=\psi_1(p)$ for all $p\in P_g$.  Kelsey--Lodge prove that two post-critically finite quadratic rational maps are pure Hurwitz equivalent if and only if their dynamical portraits are isomorphic as directed graphs \cite[Lemma 3.6]{KL}.  Our main theorem is that given any two pure Hurwitz classes, there is a mapping class that takes a representative of one to a representative of the other.

\begin{theorem}\label{thm:main}
Let $g,h:S^2\rightarrow S^2$ be marked quadratic branched covers with dynamical portraits $\Gamma_g$ and $\Gamma_h$.  There exists a finite set of points $M\subset S^2$ that contains the post-critical set for $g$ and $h$ and $\phi\in\Mod(S^2,M)$ so that $g\circ \phi$ is pure Hurwitz equivalent to $h$.
\end{theorem}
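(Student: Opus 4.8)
The plan is to reduce the statement to a purely combinatorial problem about dynamical portraits and then solve it by an explicit construction. By the Kelsey--Lodge criterion \cite[Lemma 3.6]{KL}, $g\circ\phi$ and $h$ are pure Hurwitz equivalent as soon as their dynamical portraits are isomorphic as labeled directed graphs, so it suffices to produce a marked set $M\supseteq P_g\cup P_h$ and a mapping class $\phi\in\Mod(S^2,M)$ for which the portrait of $(g\circ\phi,M)$ is isomorphic to $\Gamma_h$. The first observation is that the portrait of $g\circ\phi$ is controlled in an elementary way by $\phi$: since $\phi$ is a homeomorphism, the critical points of $g\circ\phi$ are exactly $\phi^{-1}(c_1),\phi^{-1}(c_2)$, where $c_1,c_2$ are the two critical points of $g$, each still of local degree $2$, and the forward map on marked points is $p\mapsto g(\phi(p))$. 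Thus if $\sigma$ denotes the permutation of $M$ induced by $\phi$, the labeled directed graph $\Gamma_{g\circ\phi}$ depends only on $\sigma$; it is the functional graph of $g|_M\circ\sigma$ with the two $\ast$-vertices placed at $\sigma^{-1}(c_1)$ and $\sigma^{-1}(c_2)$.

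The second ingredient is that we are free to prescribe $\sigma$ arbitrarily. The natural map $\Mod(S^2,M)\to\mathrm{Sym}(M)$ to the symmetric group on $M$, recording the action on marked points, is surjective, since half-twists along arcs joining consecutive marked points realize the transpositions that generate $\mathrm{Sym}(M)$, and any two lifts of the same permutation differ by an element of $\PMod(S^2,M)$, which does not change the portrait. Consequently the problem becomes: choose a finite $g$-invariant set $M$ with $\{c_1,c_2\}\cup P_g\subseteq M$ and a permutation $\sigma\in\mathrm{Sym}(M)$ so that $g|_M\circ\sigma$, marked at $\sigma^{-1}(c_1),\sigma^{-1}(c_2)$, realizes the abstract portrait $\Gamma_h$. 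Here it is essential that $g$ has degree $2$: a Riemann--Hurwitz count forces exactly two critical points, and a second degree count shows that the two critical values $g(c_1),g(c_2)$ are always distinct, so $g|_M$ already supplies, near each marked value, the two distinct preimages needed to route the forward orbits freely.

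To carry out the realization I would argue combinatorially on portraits. Every portrait of a quadratic post-critically finite map is a functional graph on the marked set in which every vertex has out-degree one and there are exactly two $\ast$-vertices; such a graph is determined by the two critical orbits, each a preperiodic tail landing on a cycle, together with the pattern in which the two orbits merge. I would first put $\Gamma_h$ into such a normal form, then build $M$ by taking $P_g\cup P_h$ and enlarging it to a $g$-invariant set, and finally define $\sigma$ one orbit at a time: using the two available $g$-preimages at each point, assign $\sigma$ so that successive marked points are threaded into the tails and cycles prescribed by $\Gamma_h$ and so that the preimages of $c_1,c_2$ land at the two points designated critical in $\Gamma_h$. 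Each such assignment is a finite matching problem that, because of the freedom coming from the two preimages and from enlarging $M$, can always be completed to a bijection; translating the chosen $\sigma$ back through the surjection $\Mod(S^2,M)\to\mathrm{Sym}(M)$ produces the desired $\phi$.

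The main obstacle is the simultaneity of the constraints on $M$ and $\sigma$: the set $M$ must at once contain the post-critical set $P_g$ of the original map, be invariant under $g$, and admit a permutation whose composite with $g$ reproduces both critical orbits of $\Gamma_h$ with the correct local degrees and merging pattern. The post-critical points of $g$ cannot be discarded, so they must be absorbed into the target structure, which is what forces the bookkeeping and the passage to a possibly larger marked set. Organizing this absorption into a finite sequence of explicit elementary moves, each realized by a concrete mapping class and each altering the portrait in a single controlled way, is the technical heart of the argument.
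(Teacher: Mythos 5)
Your reduction is sound and in fact matches the paper's own framing: by the Kelsey--Lodge criterion it suffices to match portraits; the portrait of $g\circ\phi$ depends only on the permutation $\sigma$ that $\phi$ induces on the marked set; and $\Mod(S^2,M)\to\mathrm{Sym}(M)$ is surjective with pure kernel, so the problem is purely combinatorial (this is precisely the paper's ``braids versus symmetric groups'' observation, and your degree counts --- exactly two simple critical points with distinct critical values --- are correct). The gap is that the resulting combinatorial problem is then asserted rather than solved. The claim that ``each such assignment is a finite matching problem that \ldots\ can always be completed to a bijection'' \emph{is} the content of the theorem, and your final paragraph concedes as much (``organizing this \ldots\ is the technical heart of the argument''). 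Concretely: to make $g\circ\sigma$ send $x$ to $y$ you must place $\sigma(x)$ inside $g^{-1}(y)\cap M$; each target $y$ offers at most two such slots (only one when $y$ is a critical value), each slot can be used by only one source since $\sigma$ is a bijection, the leftover points of $P_g$ must still be sent somewhere inside the $g$-invariant set $M$ without overloading the in-degree of any portrait vertex, and the two critical orbits of $\Gamma_h$ can merge in several genuinely different patterns (disjoint pre-periods, distinct and intersecting pre-periods, one pre-period contained in the other, one or both critical points periodic). Showing these constraints are simultaneously satisfiable for an arbitrary target is not a routine application of a matching theorem, and no argument is given.

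This unproved step is exactly what the body of the paper supplies: a classification of quadratic portraits by a finite list of features (the proposition in Section \ref{sec:portaits_feature}), three elementary moves realized by half-twists (Section \ref{sec:functions}), each with a proof of its precise effect on the portrait --- including the verification, needed in several cases, that a non-post-critical preimage $\diamond$ is available to twist against --- followed by an algorithm reducing both $\Gamma_g$ and $\Gamma_h$ to the portrait of $z^2$, a termination proof, and composition of one twist sequence with the inverse of the other. If you want to keep your one-shot construction of $\sigma$ instead of passing through the normal form $z^2$, you would need to supply an analogous case analysis of the merging patterns together with an explicit, verified routing of preimages; as written, the proposal stops exactly where the theorem begins.
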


In particular, $g$ and $h$ may have different numbers of post-critical points.  Using an impure mapping class $\phi$ allows us to increase or decrease the number of post-critical points of the branched cover. 

We contrast our main result with that of Kelsey--Lodge \cite{KL} who find M\"obius transformations between {\it combinatorial classes} of post-critically finite branched covers with four or fewer post-critical points (that are not nearly Euclidean Thurston maps \cite{FPP2,FPP}). 

\p{Strategy} We prove Theorem \ref{thm:main} by developing an algorithm that finds a sequence of marked branched covers $\{(g,M_g)=(f_0,M_0),(f_1,M_1),\cdots,(f_n,M_n)=(h,M_h)\}$ such that each $f_i$ differs from $f_{i+1}$ by what we call a (portrait) function.  The functions are half twists either between points in $M_i$ or between one point in $M_i$ and a preimage of a critical point.  If the function by which we compose is a half twist between a point in $M_i$ and another point $\diamond$, we take $M_{i+1}=M_i\cup\{\diamond\}$.

The overall strategy is as follows: start with the marked branched covers $(g,M_g)=(f_0,M_0)$ and $(h,M_h)$.  We create sequences of marked branched covers $\{(f_{i},M_i)\}$ where the portrait for $f_{i+1}$ differs from the portrait from $f_i$ by one of the functions \ref{Function:Split}-\ref{Function:DecreasePeriod} defined in Section \ref{sec:functions}. 
 This process terminates when we reach a branched cover $f_n$ that has the same portrait as $z^2$.  We repeat the process for $(h,M_h)$.  Then invert the sequence from $(h,M_h)$ to $z^2$ to create a sequence from $(g,\Gamma_g)$ to $(h,\Gamma_h)$.

 \p{Marked set} Theorem \ref{thm:main} states that marked branched covers $(g,M_g)$ and $(h,M_h)$ that for {\it some} marked set $M$, there is a mapping class $\phi\in\Mod(S^2,M)$ for which the portrait of $g\phi$ is isomorphic to the portrait of $h$.  As indicated above, the algorithm determines this set by first beginning with the post-critical set of $M_0=M_f$ and then adding additional points at some stages of the algorithm.  At the $i$th stage, we add at most one point to the set $M_{i-1}$. 
The point that is (possibly) added is a preimage of a point that is a critical point of $f_i$.  We repeat the algorithm with $h$ to obtain another set of points consisting of $M_h$ and a union of points obtained in each step.  Because the algorithm terminates in a finite number of steps, the set $M$ consists of the union of $M_g\cup M_h$ with a finite set of points.

\subsection{Future directions} 
Much of the study of actions of mapping class groups on sets of post-critically finite branched covers has restricted to actions of {\it pure} mapping class groups.  The primary goal of this paper is to promote the study of the action of {\it impure} mapping class groups on post-critically finite branched covers. 
Moreover, the functions we present in Section \ref{sec:functions} to change dynamical portraits provide a tool to use arguments that induct on the number of points of a post-critical set in families of post-critically finite branched covers.

\p{Braids versus symmetric groups} As indicated above, we construct a sequence of marked branched covers $\{(g,M_g)=(f_0,M_0),(f_1,M_1),\cdots,(f_n,M_n)=(h,M_h)\}$, and keep track of a sequence of mapping classes $\{\phi_1,\cdots, \phi_n\}$ for which $f_{i+1}=f_i\phi_{i+1}$.  The functions are agnostic to the choice of mapping class up to its action on the marked set $M_i$.  We show that we can ``move between" different pure Hurwitz equivalence classes by composing by an impure mapping class.  With additional information about the actual mapping class (rather than just its quotient to a finite symmetric group), it is possible that one can make stronger statements about finer equivalence classes (such as combinatorial equivalence).

\p{Twisting problems} Thurston invigorated the study of post-critically finite branched covers $S^2\rightarrow S^2$ by proving that outside of a family of well-understood examples, every post-critically finite branched cover $S^2\rightarrow S^2$ is either isotopic to the conjugate of a rational map or has a certain type of topological obstruction.  Thurston's work has inspired decades of work on post-critically finite branched covers $S^2\rightarrow S^2$, including twisting problems, which we describe as follows.  Let $f$ be a rational map with post-critical set $P_f$.  The pure mapping class group $\PMod(S^2,P_f)$ is the group of homotopy classes of orientation preserving homeomorphisms of $S^2$ that fix the set $P_f$.  A twisting problem is the following: let $\Gamma$ be a subgroup of $\PMod(S^2,P_f)$.  For all $\phi\in\Gamma$ determine whether $\phi\circ f$ is obstructed or which rational map it is equivalent to.  The classic such example of a twisting problem is the {\it twisted rabbit problem}, presented by Hubbard and solved by Bartholdi--Nekrashevych \cite{BaNe}.

\p{Realiziability of rational maps} Our algorithm gives no information about whether a portrait can be realized as a {\it rational} map, as in Floyd--Kim--Koch--Parry--Saenz \cite{FKKPS}.  Theorem \ref{thm:main} gives us that for any pair of portraits, there is always an impure mapping class by which we can compose some representative with one portrait to obtain a representative with the other portrait.  Therefore, the realizability of a portait (as described by Floyd--Kim--Koch--Pary--Saenz) is not invariant under the action of an impure mapping class group.  A natural question is with finer details about a combinatorial class of a rational map and the exact mapping class used, whether it is possible to construct paths of rational maps within the space of post-critically finite branched covers $S^2\rightarrow S^2$?

\p{Generalization to higher degree} We expect that Theorem \ref{thm:main} should hold if we replace quadratic branched covers $S^2\rightarrow S^2$ with any Hurwitz class of branched covers $S^2\rightarrow S^2$, that is: for post-critically finite branched covers of the sphere of any degree where the critical values have the same ramification data.  Our algorithm does use the hypothesis that the branched covers under consideration are quadratic in significant ways.  In particular, we assume that the branched covers under consideration have exactly two critical points of ramification two and we use the fact that every post-critical point that is not a critical value has exactly two preimages.  However, we conjecture that with the appropriate modifications for any Hurwitz class, one could obtain an analogous result to Theorem \ref{thm:main}.

\subsection{Outline of paper} In Section \ref{sec:portaits_feature} we establish that a dynamical portrait of a quadratic marked branched cover can be described by the number of components of the portrait, the number and structure of the pre-periods of each component, the pre-period length(s), and the cycle length(s).  In Section \ref{sec:functions} we specify the functions that modify the dynamical portraits in the desired way.  In Section \ref{sec:algorithm} we give the algorithm that finds the mapping class $\phi$ in Theorem \ref{thm:main}.  Finally, in Section \ref{sec:proof}, we prove that the algorithm terminates in finitely many steps, thus completing a proof of Theorem \ref{thm:main}.

\subsection{Acknowledgments} The authors would like to thank ICERM where the project originated.  The authors would like to thank Russell Lodge, Dan Margalit, Kevin Pilgrim, and Kevin Walsh for conversations about the paper. The authors would like to thank Kevin Pilgrim and the University of Indiana for hosting the authors at the Early-Career Complex Dynamics workshop in summer 2024 and 2025 and Giulio Tiozzo for inviting the second author to University of Toronto.  The first author was partially supported by an NSERC PDF, and was partially supported by NSERC Discovery Grant RGPIN-2024-05587.  The second author was supported by the National Science Foundation under Grant No.\ DMS-2002951.

\section{Portrait features}\label{sec:portaits_feature}  
In this section, we define dynamical portraits and describe the combinatorics of the portraits, which we call {\it portrait features}.  In particular, we define enough portrait features to describe a portrait up to isomorphism, which is the content of Proposition \ref{prop:same}.

\subsection{Abstract portraits}
We say that a labeled directed graph $\Gamma$ is an {\it abstract portrait} if:
\begin{enumerate}
\item All vertices of $\Gamma$ belong in at least one of two sets: a set of points distinguished as critical points $C$ and a set of points distinguished as post-critical points $P$.  The intersection $P\cap C$ need not be empty.
\item Every vertex of $\Gamma$ is the origin of exactly one directed edge that terminates at another vertex of $\Gamma$.
\item Each directed edge is labeled with a positive integer, and a directed edge that originates at a point in $C$ is labeled with an integer greater than 1.
\item The set $P$ exactly consists of the vertices that are the terminus of at least one directed edge. 
\item Every component of $\Gamma$ contains at least one point in $C$.
\end{enumerate}
We will use several consequences of this definition:
\begin{enumerate}[label=(\Alph*)]
    \item every component of $\Gamma$ contains exactly one cycle,
    \item two edges originating at different periodic points cannot have the same terminus,
            \item for every vertex $p\in P$, there exists a directed path from some $c\in C$ to $p$, and
    \item for every vertex $v$ of $\Gamma$ that does not belong to a cycle of $\Gamma$, there is a directed path beginning at $v$ and ending at a vertex in the cycle of $\Gamma$.

\end{enumerate}
Consequences (A) and (B) follow from (2) above.  Consequence (C) follows inductively from (4) and (5).  Consequence (D) follows from consequence (C) and (2).

We say that any vertex in a cycle of $\Gamma$ is {\it periodic} in $\Gamma$ and any vertex that is not in a cycle of $\Gamma$ is {\it (strictly) pre-periodic}.  The set of pre-periodic points in $P$ is called a {\it pre-period} of $\Gamma$.  Because $\Gamma$ is directed, each pre-period has a first point that is point in $C$.

\p{Abstract portraits for marked branched covers} Let $f$ be a marked post-critically finite branched cover.  There is an abstract portrait associated to $f$ in which the vertices correspond to a subset of the marked set $M_f$. The vertices corresponding to critical points are labeled $\ast$.  Let $p_1$ and $p_2$ be points in $M_f$, and let $v_i$ be the vertex of the portrait of $f$ corresponding to $p_i$.  There is an edge of the portrait from $v_1$ to $v_2$ if $f(p_1)=p_2$.  The edge will be labeled with the local degree of $f$ at $p_1$.  The label will be greater than one if and only if $v_1$ is a critical point of $f$.  We will suppress the labels when their value is one, or if they are not relevant to the discussion.  We observe that because $f$ is a branched cover, the sum of the labels of the arrows that terminate at a vertex cannot exceed the degree of the branched cover.  We can think of $f$ as acting on its dynamical portrait by mapping a vertex $v$ of the portrait of $f$ to the vertex at the terminus of the edge originating at $v$.  In this way, we will think of the vertices of a portrait as the orbit of $f$.

This paper focuses on quadratic branched covers, which have additional restrictions.  We say that an abstract portrait $\Gamma$ is {\it compatible with a quadratic branched cover} if:
\begin{itemize}
    \item the set of vertices of $\Gamma$ distinguished as critical points $C$ has exactly two points,
    \item each edge originating at a point of $C$ is labeled with a ``2",
    \item the sum of the labels of edges that terminate at each vertex is at most 2.
\end{itemize} 
Because much of the focus of the paper is on {\it marked} branched covers, we additionally require that the marked set of a marked branched cover must contain the set of vertices of the portrait, but may (and likely will) contain additional marked points that are not part of the abstract portrait.  Therefore everything that follows will also apply to marked quadratic branched covers where the vertex set is contained in the marked set.

\subsection{Portrait features} An abstract portrait that is compatible with a quadratic branched cover can be described in terms of its features.  

Let $\Gamma$ be an abstract portrait that is compatible with a marked quadratic branched cover. Let $C=\{C_1,C_2\}$ be the set of points distinguished as critical points.

\p{Components} Every component of $\Gamma$ contains at least one point that is distinguished as a critical point. Because $\Gamma$ is compatible with a quadratic branched cover, it has only two points distinguished as a critical point.  Therefore $\Gamma$ has either one or two connected components.
  
\p{Number of pre-periods} As above, any point contained in the cycle of a graph is {\it periodic}.  If a point distinguished as a critical point is not periodic, there must be a directed path from it to (at least one) periodic point and we say that it is {\it (strictly) pre-periodic}.  If a point distinguished as a critical point $C_j$ for $j\in\{1,2\}$ is pre-periodic, there is a (unique) shortest path from $C_j$ to a cycle of the graph $\{C_j,P_1,\cdots,P_r\}$ where $P_r$ is periodic and no other point in the path is periodic.  We call the set of points $\{C_j,P_1,\cdots,P_{r-1}\}$ the {\it pre-period of $C_j$}. 
Because every pre-period must begin with a point distinguished as a critical point, $\Gamma$ can have either zero, one, or two pre-periods (at most one for each point distinguished as critical points that it contains). If $\Gamma$ has two components, each component contains one point distinguished as a critical point.  Each component has one pre-period for each pre-periodic critical point, so each component can have zero, one, or two pre-periods.

\p{Pre-period patterns} If $\Gamma$ has one component and both points of $C$ are strictly pre-periodic, their pre-periods can either be disjoint or can intersect non-trivially.   
If the pre-periods of $C_1$ and $C_2$ intersect non-trivially, there are two additional possibilities:\begin{itemize}
         \item either the pre-period of $C_1$ contains a post-critical point that is not in the pre-period of $C_2$ and the pre-period of $C_2$ contains a post-critical point that is not in the pre-period of $C_1$ or
            \item one point distinguished as a critical point is in the pre-period of the other.
     \end{itemize} 
We will refer to the pre-periods in the former case as {\it distinct and intersecting.}  If $C_2$ is not in the pre-period of $C_1$, but the pre-periods of $C_1$ and $C_2$ intersect non-trivially, then the pre-period of $C_2$ must contain a post-critical point that is not in the pre-period of $C_2$.  Otherwise the portrait is not compatible with a quadratic branched cover.  If one point distinguished as a critical point is in the pre-period of the other, then one pre-period is {\it contained in the other}.  In both cases, there is a unique first point of the intersection of the pre-periods and a unique first periodic point.

\p{Length of pre-period and cycle} As above, every component of a portrait has exactly one cycle (possibly just a single point with an edge to itself).  We call the number of vertices in the cycle the {\it length of the cycle} of the component.  If $C_j$ is the only point of $C$ in a component, we can refer to the {\it cycle of $C_j$} as the cycle in the component containing $C_j$.  We denote the length of the cycle of $C_j$ by $k_j$.  If $C_j$ for $j\in\{1,2\}$ is strictly pre-periodic, we call the number of points in its pre-period the {\it length of the pre-period of $C_j$}.  As above, we will denote the length of the pre-period of $C_j$ as $r_j$.  Moreover, the vertices of the pre-period are ordered as the shortest directed path from $C_j$ to a periodic point (the periodic point is not part of the pre-period).  If a component of $\Gamma$ contains both of the vertices in $C$, we record other distances, dependent on the pre-period structure, which we explain as follows.

If a component of $\Gamma$ contains both elements of $C$, both points of $C$ are strictly pre-periodic, and one vertex in $C$ is contained in the pre-period of the other, we label the points of $C$ so that the pre-period of $C_2$ is contained in the pre-period of $C_1$.  Then there exists a directed path from $C_1$ to $C_2$ (and no directed path from $C_2$ to $C_1$).  Let $r_j$ be the length of the pre-period of $C_j$.  The length of the directed path from $C_1$ to $C_2$ will have length $r_1-r_2$. 

If a component of $\Gamma$ has two pre-periods that are distinct and intersecting, we can specify the length of the cycle ($k$, as above), the length of the intersection of the pre-period of $C_1$ and $C_2$, the length of the pre-period of $C_1$ that is unique from the pre-period of $C_2$, and the length of the pre-period of $C_2$ that is unique from the pre-period of $C_1$.

If a component of $\Gamma$ contains elements of $C$ and has no pre-periods, then both $C_1$ and $C_2$ are periodic.  Then there exists a directed path from $C_1$ to $C_2$.  The length of the (unique) shortest such directed path is called the {\it distance between $C_1$ and $C_2$}, which we denote by $k_1$ as above.  Likewise, there is a (unique) shortest directed path from $C_2$ to $C_1$, and its length is denoted by $k_2$. 
 We call this the {\it distance between $C_2$ and $C_1$}.

If a component of $\Gamma$ contains both vertices in $C$ and one is periodic and the other is pre-periodic, then $\Gamma$ has one pre-period.  Label the periodic point of $C$ as $C_2$ and the strictly pre-periodic point of $C$ as $C_1$. 
 In particular, there will be a directed path from $C_1$ to $C_2$ but not from $C_2$ to $C_1$.   Let $q$ be the length of the (shortest) directed path from $C_1$ to $C_2$ and let $r$ be the length of the pre-period of $C_1$ as above.  We refer to $q-r$ as the {\it position of $C_2$ in the cycle of $C_1$.}  We can also measure the length of the shortest path from $C_2$ to the first periodic point, which will be $k-(q-r)$.

 If a component of $\Gamma$ has two disjoint pre-periods, each vertex in $C$ has directed path to the set of periodic points.  In particular, each point of $C$ has a shortest path to a periodic point.  Let $T_j$ be the first periodic point any directed path originating at $C_j$ contains (ie. the terminal vertex of the shortest path from $C_j$ to a periodic vertex).  As above, we will let $r_j$ be the length of the directed path from $C_j$ to $T_j$ for $j\in\{1,2\}$.  Because both $T_1$ and $T_2$ are periodic, there is a directed path from $T_1$ to $T_2$ and a directed path from $T_2$ to $T_1$.  We call length of the shortest path from $T_1$ to $T_2$ {\it distance between the pre-period of $C_1$ and the pre-period of $C_2$} and denote it by $k_1$ (since it is a distance in the cycle).  Likewise, we call the length of the shortest path from $T_2$ to $T_1$ {\it distance between the pre-period of $C_2$ and the pre-period of $C_1$} and denote it by $k_2$.  That is, we track four lengths: $r_1$, the length of the pre-period of $C_1$, $r_2$ the length of the pre-period of $C_2$, $k_1$ the distance between the pre-period of $C_1$ and $C_2$, and $k_2$ the distance between the pre-period of $T_2$ and $T_1$. The quantities $r_1,r_2,k_1$ and $k_2$ are indicated in Figure \ref{fig:disjoint_distances}.

When the portrait has one component, there are five possible pre-period structures.  If both critical points are strictly pre-periodic, and their pre-periods intersect non-trivially, we need only measure the length of the cycle.  In the remaining three cases, we observe that there are two points in the cycle of $\Gamma$ between which we measure distance: 
\begin{enumerate}
    \item when there are no pre-periods, we measure the distance between the critical points in the cycle,
    \item when there is one pre-period, we measure the position of $C_2$ in the cycle of $C_1$: that is, the distance between the pre-period and $C_2$, and
    \item when there are two disjoint pre-periods, we measure the distance between $T_1$ and $T_2$.
\end{enumerate}
To unify these three cases, let $R_j$ be the first point in a directed path beginning at $C_j$ (including $C_j$) that is periodic.  Let $k_1$ be the length of the shortest path between $R_1$ and $R_2$ and let $k_2$ be the length of the shortest path between $R_2$ and $R_1$.  In these cases, we call the lengths $k_1$ and $k_2$ {\it cycle lengths}.  If $k_1=0$ then $C_1$ must be pre-periodic and $C_2$ must be periodic and the first periodic point in the cycle (of $C_1$).  Our assumption that if one critical point is pre-periodic and the other is periodic is that $C_1$ is the pre-periodic critical point. This excludes the possibility that $k_2=0$. 
 In the cases where the pre-periods of $C_1$ and $C_2$ intersect non-trivially, there is only one cycle length, $k$.

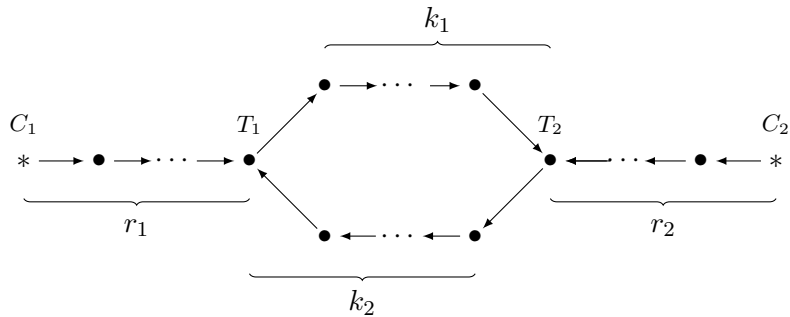
\begin{figure}
\begin{tikzpicture}\label{fig:cycle_lengths}
\node[label={\footnotesize $C_1$}] at (3,2) {$\ast$};
\node at (4,2) {$\bullet$};
\node at (5,2) {$\cdots$};
\node[label={\footnotesize $T_1$}] at (6,2) {$\bullet$};
\node at (7,3) {$\bullet$};
\node at (8,3) {$\cdots$};
\node at (9,3) {$\bullet$};
\node[label={\footnotesize $T_2$}] at (10,2) {$\bullet$};
\node at (11,2) {$\cdots$};
\node at (12,2) {$\bullet$};
\node[label={\footnotesize $C_2$}] at (13,2) {$\ast$};
\node at (7,1) {$\bullet$};
\node at (8,1) {$\cdots$};
\node at (9,1) {$\bullet$};
 \draw [-latex] (3.2,2) -- (3.8,2);
  \draw [-latex] (4.2,2) -- (4.7,2);
   \draw [-latex] (5.3,2) -- (5.8,2);
  \draw [-latex] (11.8,2) -- (11.25,2);
  \draw [-latex] (12.8,2) -- (12.2,2);
\draw [-latex] (10.75,2) -- (10.2,2);
\draw [-latex] (6.1,2.1) -- (6.9,2.9);
\draw [-latex] (10.8,2) -- (10.2,2);
  \draw [-latex] (8.4,3) -- (8.8,3);
  \draw [-latex] (9.1,2.9) -- (9.9,2.1);
    \draw [-latex] (9.9,1.9) -- (9.1,1.1);
   \draw [-latex]  (7.2,3) -- (7.7,3);
      \draw [-latex]  (7.68,1) -- (7.2,1);
     \draw [-latex]  (8.8,1) -- (8.3,1);
     \draw [latex-]  (6.1,1.9) -- (6.9,1.1);
\draw [decorate,
    decoration = {brace,mirror}] (3,1.5) --  (6,1.5) node[midway,yshift=-1em]{$r_1$};
    \draw [decorate,
    decoration = {brace,mirror}] (6,.5) --  (9,.5) node[midway,yshift=-1em]{$k_2$};
    \draw [decorate,decoration = {brace,mirror}] (10,1.5) --  (13,1.5) node[midway,yshift=-1em]{$r_2$};
    \draw [decorate,
    decoration = {brace}] (7,3.5) --  (10,3.5) node[midway,yshift=1em]{$k_1$};
     
\end{tikzpicture}
\caption{When a portrait has two disjoint pre-periods, we measure the length of each pre-period $r_1-1$ and $r_2-1$ and the distances between the pre-periods, $k_1$ and $k_2$.}\label{fig:disjoint_distances}
\end{figure}

The number of components, number of pre-periods, pre-period patterns, lengths of pre-periods (and their intersection, if relevant), and distance between critical points or pre-periods are sufficient information to determine the portrait of a quadratic branched cover of $S^2\rightarrow S^2$.

\begin{proposition}\label{prop:same}
    Let $\Gamma^1$ and $\Gamma^2$ be two abstract portraits that are compatible with quadratic maps.  If $\Gamma^1$ and $\Gamma^2$ have the same number of components, the same number of pre-periods, the same pre-period patterns, the same pre-periods lengths, and the same cycle lengths, then $\Gamma^1$ and $\Gamma^2$ are isomorphic as abstract (directed) graphs.
\end{proposition}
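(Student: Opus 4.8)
The plan is to prove a reconstruction (rigidity) statement: every admissible tuple of features is realized by a unique abstract portrait up to isomorphism. Since $\Gamma^1$ and $\Gamma^2$ carry the same tuple, each is isomorphic to the canonical model for that tuple, and hence to each other. Concretely, I would build an explicit vertex bijection $\Gamma^1 \to \Gamma^2$ out of the recorded lengths and check that it preserves the directed edges and their labels.

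First I would record the global structure forced by the hypotheses. By property (2) every vertex is the origin of exactly one edge, so $\Gamma^i$ is a functional graph, and by consequence (A) each of its (one or two) components contains exactly one cycle. Because $\Gamma^i$ is compatible with a quadratic branched cover, $C=\{C_1,C_2\}$ has exactly two points, so consequence (C) says that every vertex lies on the forward orbit $\mathcal{O}(C_1)$ or $\mathcal{O}(C_2)$, where $\mathcal{O}(C_j)$ is the directed path that leaves $C_j$, runs through its pre-period, and then cycles. Thus $\Gamma^i$ is the union of two ``rho''-shaped orbits. The rigidity lemma I would isolate is that, since every vertex has out-degree exactly one, any vertex common to both orbits has a unique successor common to both; hence the common vertices form a forward-closed tail, and the two orbits can share at most a single ``first common vertex,'' after which they coincide. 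The quadratic label bound (total incoming label at most two, with label-$2$ edges only at $C_1,C_2$) further restricts where such a merge can occur. Consequently $\Gamma^i$ is completely specified by how far each orbit runs before it either meets the other orbit or closes into its cycle, together with where these events happen --- which is exactly the data of the pre-period pattern, the pre-period lengths $r_j$, the intersection length, and the cycle lengths $k_1,k_2$ (or the single $k$).

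Next I would carry out the reconstruction by cases, following the section's enumeration. If $\Gamma^i$ has two components, each is the forward orbit of one critical point, i.e. a directed path of length $r_j$ attached to a directed cycle of length $k_j$; such a graph is determined up to directed isomorphism by the pair $(r_j,k_j)$, and the recorded number of pre-periods records for each component whether $r_j=0$ or $r_j>0$. Matching the $C_j$-component of $\Gamma^1$ to the $C_j$-component of $\Gamma^2$ and concatenating the evident path-and-cycle bijections gives the isomorphism. If $\Gamma^i$ has one component, I would treat the five pre-period patterns separately: both critical points periodic; one periodic and one pre-periodic; two disjoint pre-periods (the configuration of Figure \ref{fig:disjoint_distances}); and the two intersecting configurations (distinct-and-intersecting, and one pre-period contained in the other). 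In each pattern the recorded lengths prescribe precisely how long each orbit runs, at which vertex the orbits merge, and where the merged path enters the common cycle, so I can write down a canonical model and read off a vertex bijection that preserves directed edges. Because that bijection sends critical points to critical points, it respects the labels automatically (an edge carries label $2$ if and only if it originates at a point of $C$), so it is an isomorphism of labeled directed graphs.

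The main obstacle is the bookkeeping in the one-component intersecting cases, where I must verify both exhaustiveness (these five patterns are the only ways two rho-shaped orbits can sit in a single component under the quadratic constraints) and rigidity (the recorded lengths pin down the merge vertex and the shared tail, with no residual freedom). The single-meeting-point lemma is what drives rigidity, since it forbids the orbits from crossing, separating, and re-merging, leaving only a well-defined first common vertex. The remaining work is the careful accounting that the labeling conventions fixed in the section --- ordering $C_1,C_2$ so that the contained pre-period is that of $C_2$, and so that the pre-periodic critical point is $C_1$ when exactly one critical point is periodic --- remove the last ambiguities and make each canonical model unambiguous.
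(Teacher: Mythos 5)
Your proposal is correct and follows essentially the same route as the paper: case analysis on the number of components and the pre-period pattern, a labeling convention that pins down which critical point of $\Gamma^1$ corresponds to which critical point of $\Gamma^2$, and extension of the bijection along forward orbits, with the key rigidity point being that the two orbits can merge only once and the merge vertex is determined by the recorded lengths. The paper packages this as a direct inductive definition of the isomorphism followed by a well-definedness check at in-degree-two vertices, whereas you factor it through a canonical model, but the mathematical content is the same.
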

In the proof below, we define an isomorphism by sending one critical point of $\Gamma^1$ to a critical point of $\Gamma^2$.  However, we have to choose which critical point of $\Gamma^1$ is sent to each critical point of $\Gamma^2$, and the choice depends on the portrait features.  Hence, the proof below is entirely laying out casework to explicitly explain which critical point of $\Gamma^1$ maps to each critical point of $\Gamma^2$.
\begin{proof}
We will define a map $\eta:\Gamma^1\rightarrow\Gamma^2$ by specifying how to map the points distinguished as critical points.  In what follows we will refer to these points as ``critical points" for simplicity, with the understanding that there is not necessarily an associated map for which they actually are critical points.  Below we specify how to label the critical points in $\Gamma^1$ and $\Gamma^2$ to ensure $\eta$ is an isomorphism.

If $\Gamma^1$ (and hence $\Gamma^2$) has two (non-empty) components, then each component will contain one critical point.  If the components are isomorphic, the choice of labeling of critical points does not matter, so label the critical points of $\Gamma^1$ by $C_1^1$ and $C_2^1$ and the critical points of $\Gamma^2$ by $C_1^2$ and $C_2^2$.  If one of the components is a cycle and the other is not, let $C_2^1$ be the critical point in the cyclic component of $\Gamma^1$ and let $C_2^2$ be the critical point in the cyclic component of $\Gamma^2$.  Likewise, label the critical point in the pre-periodic component of $\Gamma^i$ by $C_1^i$ for $i\in\{1,2\}$. 

If both components are cyclic, let $C_1^1$ be the critical point in the longer cycle of $\Gamma^1$ and let $C_1^2$ be the critical point in the longer cycle of $\Gamma^2$.  Likewise, let $C_2^i$ be the critical point in the shorter cycle of $\Gamma^i$ for $i=\{1,2\}$.

If both components have a pre-period and are not isomorphic, either the pre-periods or the cycles have to be different lengths (or both).  If the pre-periods are of different length, let $C_1^i$ be the critical point in $\Gamma^i$ with the longer pre-period and let $C_2^i$ be the critical point in $\Gamma^i$ with the shorter pre-period.  If the pre-periods are the same length but the cycles are of different length, let $C_1^i$ be the critical point in $\Gamma^i$ with the longer cycle and let $C_2^i$ be the critical point in $\Gamma^i$ with the shorter cycle.

If $\Gamma^1$ (and hence $\Gamma^2$) has one component, then both critical points are in that component.  If the component of $\Gamma_1$ is has no pre-periods, then it is cyclic.  In this case, there is a (shortest) directed path from each critical point to the other.  If the lengths of the paths are equal, the choice of labeling of critical points does not matter, so label the critical points of $\Gamma^1$ by $C_1^1$ and $C_2^1$ and the critical points of $\Gamma^2$ by $C_1^2$ and $C_2^2$.  If the lengths of the directed paths are unequal, label $C_1^i$ as the critical point of $\Gamma^i$ at the origin of the longer path and $C_2^i$ as the critical point in $\Gamma^i$ at the origin of the shorter path.  

If the graph $\Gamma^1$ (and hence $\Gamma^2$) has one component and one pre-period, for $i\in\{1,2\}$ let $C_1^i$ be the critical point in $\Gamma^i$ that is strictly pre-periodic.  The other critical point in $\Gamma^i$ is thus periodic and hence is in the cycle of $C_1^i$.  Label the other critical point $C_2^i$.

If both critical points in $\Gamma^1$ (and hence also in $\Gamma^2$) are pre-periodic, then either the pre-periods are of equal length or they are not.  If one pre-period is longer, let $C_1^i$ be the critical point in $\Gamma^i$ with the longer pre-period and let $C_2^i$ be the critical point in $\Gamma^i$ with the shorter pre-period.  If the pre-periods are of equal length, there are two cases:
\begin{enumerate}
\item If the pre-periods of the critical points are the same length in $\Gamma^1$ (and hence also in $\Gamma^2$) and distinct and intersecting, the choice of labeling of critical points does not matter, so label the critical points of $\Gamma^1$ by $C_1^1$ and $C_2^1$ and the critical points of $\Gamma^2$ by $C_1^2$ and $C_2^2$.

\item If both critical points in $\Gamma^1$ have disjoint pre-periods of the same length, there is a unique first periodic point at paths originating at each of the critical points.  Let $k_1^i$ and $k_2^i$ be the length of the two directed path from one pre-periodic point to the other, such that $k_1^i\geq k_2^i$ for each $i\in\{1,2\}$.  If $k_1^1=k_2^1$ (and by assumption $k_1^2=k_2^2$), it does not matter how we label the critical points, so label the critical points of $\Gamma^i$ by $C_1^i$ and $C_2^i$ for each $i\in\{1,2\}$.  If $k_1^1>k_2^1$ (and by assumption $k_1^2>k_2^2$), let for each $i,j\in\{1,2\}$, $T_j^i$ be the origin of the path of length $k_j^i$.  Then for $i,j\in\{1,2\}$, let $C_j^i$ be the critical point for which $T_j^i$ is the first periodic point at a path originating at $C_j^i$.
\end{enumerate}
    Let $\eta:\Gamma^1\rightarrow \Gamma^2$ be defined by $\eta(C_1^1)=C_1^2$ and $\eta(C_2^1)=C_2^2$.  For any other vertex $v^1$ in $\Gamma^1$, there is a directed path from either $C_1^1$ or $C_2^1$ (or possibly both) to $v^1$. Hence we inductively define $\eta$ as follows.  Let $v$ be a vertex of $\Gamma^1$ with $\eta(v)=w$.  Because $\Gamma^1$ is an abstract portrait, $v$ is the origin of exactly one edge in $\Gamma^1$.  Let $t_v$ be the terminus of the edge originating at $v$.  Likewise, $w$ is the origin of exactly on edge in $\Gamma^2$.  Let $t_w$ be the terminus of the edge originating at $w$ in $\Gamma^2$.  Define $\eta(t_v)=t_w$.

    We first show that $\eta$ is well-defined.  In particular, suppose $o_1$ and $o_2$ are vertices of $\Gamma^1$ such that $o_1$ and $o_2$ are both the origin of a directed edge to the same vertex $v$ of $\Gamma^1$.  We need to show that the definition of $\eta(v)$ is the same regardless of which edge (the edge originating at $o_1$ or the edge originating at $o_2$ we use to define $\eta(v)$.  We do so by showing that the edges that originate $\eta(o_1)$ and $\eta(o_2)$ have the same terminus.  By Consequence (B) above, there are two possibilities: either $o_1$ and $o_2$ are strictly pre-periodic or one of $\{o_1,o_2\}$ is pre-periodic and the other is periodic.

    If $o_1$ and $o_2$ are both strictly pre-periodic, they each belong to a pre-period of a different critical point.  Indeed, by definition a pre-period cannot have a cycle, so each element of a single pre-period of $C_1^1$ (respectively $C_2^1$) must map to a different vertex.  Therefore $o_1$ and $o_2$ must each belong to the pre-period of a different critical point.  Label them so that $o_i$ is in the pre-period of $C_i^1$.  Then $v$ is (the first point) in the intersection of the pre-periods of $C_1^1$ and $C_2^1$.  Because $\Gamma^1$ and $\Gamma^2$ have the same pre-period structure, the pre-periods of $C_1^2$ and $C_2^2$ must also be distinct and intersecting.  We claim that $\eta(v)$ is the first point of intersection of the pre-periods of $C_1^2$ and $C_2^2$.  Indeed, this follows from the assumption that the length of the pre-period of $C_1^1$ that is unique from the pre-period of $C_2^1$ is the same as the length of the pre-period of $C_1^2$ that is unique from the pre-period of $C_2^2$ and that the length of the pre-period of $C_2^1$ that is unique from the pre-period of $C_1^1$ is the same as the length of the pre-period of $C_2^2$ that is unique from the pre-period of $C_1^2$.

    If one of $o_1$ and $o_2$ is strictly pre-periodic and the other is pre-periodic, let $o_1$ be the vertex that is pre-periodic and let $o_2$ be the vertex that is periodic.  For the purpose of the proof, define $\eta(v)$ to be the terminus of the edge originating at $\eta(o_1)$, and we will show that this is the same vertex as the terminus of the edge originating at $\eta(o_2)$.  Because $o_1$ is pre-periodic, it is in the pre-period of a critical point, call it $c$ ($o_1$ could be in the pre-period of both critical points, in which case we can call either one $c$).  In fact, $o_1$ is the last pre-periodic point in the pre-period of $c$. The lengths of the pre-period(s) of $\Gamma^1$ containing $o_1$ and the pre-period(s) of $\Gamma^2$ containing $\eta(o_1)$ are the same.  Therefore $\eta(o_1)$ must also be the last pre-periodic point of the pre-period of $\eta(c)$ in $\Gamma^2$.  Then $\eta(v)$ must be the first periodic point of the component of $\Gamma^2$ that contains it, and all points in a directed path originating at $\eta(v)$ must also be periodic.  There is a directed path from $o_1$ to $o_2$ and, in particular, this directed path contains $v$.  Label the vertices of the directed path $o_1\rightarrow v\rightarrow w_1\rightarrow\cdots\rightarrow w_n\rightarrow o_2$.  Then consider the image of this directed path under $\eta$: $\eta(o_1)\rightarrow \eta(v)\rightarrow \eta(w_1)\rightarrow\cdots\rightarrow \eta(w_n)\rightarrow \eta(o_2)$, which is a directed path in $\Gamma^2$.  In particular, all points $\eta(w_1),\cdots,\eta(w_n),\eta(o_2)$ must be periodic.  Because the component of $\Gamma^1$ containing $o_1$ and the component of $\Gamma^2$ containing $\eta(o_1)$ have the same number of periodic points, $\eta(v),\eta(w_1),\cdots,\eta(w_n),\eta(o_2)$ are all of the periodic points of this component of $\Gamma^2$.  Then $\eta(o_2)$ must map to $\eta(v)$, and in fact, $\eta$ is well-defined.

It follows that $\eta$ is an isomorphism: $\eta$ is a bijection because we can construct an inverse $\eta^{-1}$ by switching the roles of $\Gamma^1$ and $\Gamma^2$ in the definition of $\eta$ (that is, $\eta^{-1}$ maps $C_1^2$ to $C_1^1$ and $C_2^2$ to $C_2^1$). Moreover, $\eta$ maps directed edge between vertices $v_1$ and $v_2$ in $\Gamma^1$ to a directed edge between $\eta(v_1)$ and $\eta(v_2)$ in $\Gamma^2$ by construction (as does $\eta^{-1}$).  Hence $\eta$ is an isomorphism.
\end{proof}

\section{Portrait functions}\label{sec:functions}
Throughout this section, let $f$ be a marked post-critically finite quadratic branched cover with portrait $\Gamma_f$ and marked set $M_f$.  We will denote the critical points of $f$ by $C_1$ and $C_2$.  As discussed above, one critical point may be in the $f$-forward orbit of the other.  If this is the case, we assume (up to relabeling) that $C_2=f^q(C_1)$ for some $q$.  Let $P_f$ be the post-critical set of $f$ and let $N_f$ be the union of $P_f$, $C_1\cup C_2$, and $f^{-1}(\{C_1,C_2\})$. 
 Let $\phi\in\Mod(S^2,N_f)$.  The map $f\phi$ will modify the portrait of $\Gamma_f$ and will have the property that $f(\phi(N_f))\subset N_f$.  We call this modification a {\it (portrait) function}.  The list below describes the portrait functions that we use by their desired effect on the portrait, though some of the functions require specific features of the starting portrait.
 
 We make a few observations about the sets $P_f$, $M_f$, and $N_f$:
 \begin{enumerate}
 \item the set $M_f$ is part of the definition of the marked branched covering map $f$, which is indicated as part of the domain of each of the functions below,
      \item we assume that $P_f\cup\{C_1,C_2\}$ is contained in $M_f$ -- when $f$ is the branched cover initiated by the algorithm in Section \ref{sec:algorithm}, $M_f$ will be constructed to contain $P_f\cup\{C_1,C_2\}$,
     \item $P_f\cup\{C_1,C_2\}\subset N_f$ by definition, and
     \item the set $N_f$ was defined only for the purpose of describing the mapping classes by which we compose -- aside from the fact that $P_f\cup\{C_1,C_2\}\subset M_f\cap N_f$, it need not have any relationship to $M_f$ and will not be used again.
 \end{enumerate}

 We define (portrait) functions that do each of the following to $\Gamma_f$.
\begin{enumerate}
    \item Split the portrait from one to two components
        \item Move $C_2$ from a cycle to a pre-period (if $C_2$ is in the $f$-orbit of $C_1$)
    \item Decrease a cycle length
\end{enumerate}
For each function, we specify by which element $\phi$ of $\Mod(S^2,N_f)$ we pre-compose $f$ by to modify the portrait as desired.  We then prove that the proposed composition modifies the portrait as desired.  In the algorithm below, we apply the functions  to target the portrait for $z^2$, which consists of two (disjoint) cycles of length one. To this end, we apply the functions in order; that is, we first apply Function \ref{Function:Split} to a portrait, then, when applying Functions \ref{Function:CriticalPointpre-periodToCycle} and \ref{Function:DecreasePeriod}, we may assume the critical points are in distinct components.  When applying Function \ref{Function:DecreasePeriod}, we may assume that the portrait has two cyclic components.

\begin{function}[Split portrait from one to two components]\label{Function:Split} Take as input a marked branched cover $f$ with marked set $M$ and associated portrait $\Gamma$ such that the portrait has one (non-empty) component.
\begin{enumerate}
    \item If the portrait of $f$ is a cycle of length $k$, that is: $C_1$ is periodic and $C_2$ is in the orbit of $C_1$ (and is hence also periodic), then there are two possibilities:
\begin{enumerate}
    \item If either $f(C_1) = C_2$ or $f(C_2)=C_1$, let $\phi$ be a half twist that swaps $C_1$ with $C_2$.  Output the marked branched cover $f\phi$, the marked set $M$, the portrait of $f\phi$, and the mapping class $\phi$.
    The portrait of $f \phi$ has two components, one where one critical point maps to itself, and the other critical point is in a cycle of period $k-1$.
    \item Otherwise, let $q$ be such that $f^q(C_1)=C_2$.  Let $\phi$ be a half twist that swaps $f^{q-1}(C_1)$ with $f^{k-1}(C_1)$ (where $k$ is the length of the cycle). Output the marked branched cover $f\phi$, the marked set $M$, the portrait of $f\phi$, and the mapping class $\phi$.  The portrait of $f\phi$ has two cyclic components -- $C_1$ is in a cycle of length $q$, $C_2$ is in a cycle of length $k-q$. 
\end{enumerate}

    \item Suppose the portrait of $f$ has one pre-period consisting of $\{C_1,f(C_1),\cdots, f^{r-1}(C_1)\}$, and $C_2$ is in the cycle of $C_1$.  Let $\diamond$ denote a pre-image of $C_1$.  Let $\phi$ be a half twist that swaps $f^{r-1}(C_1)$ with $\diamond$.  Output $f\phi$, the marked set $M\cup\{\diamond\}$, the portrait of $f\phi$ (in this case not all points of $M$ are vertices of the portrait), and the mapping class $\phi$.  Then the portrait of $f\phi$ has two cyclic components: $C_1$ is in a cycle of length $r$ and $C_2$ is in a cycle of length $k$ (where $k$ was the length of the cycle of the portrait of $f$).

    \item Suppose one critical point is in the pre-period of the other.  Let $C_2$ be in the orbit of $C_1$, and let $q$ be such that $f^q(C_1)=C_2$.  Let $\diamond$ denote a pre-image of $C_1$.   Let $\phi$ be a half twist that swaps $\diamond$ and $f^{q-1}(C_1)$ (which could be $C_1=f^0(C_1)$).  Output $f\phi$, the marked set $M\cup\{\diamond\}$, the portrait of $f\phi$ (in this case not all points of $M$ are vertices of the portrait), and the mapping class $\phi$.  Then the portrait of $f\phi$ has two components: $C_1$ is in a cyclic component of length $q-1$ and $C_2$ is in a component that is the same as the forward orbit of $C_2$ under $f$.  In particular, $C_1$ is cyclic under $f\phi$ and $C_2$ is strictly pre-periodic under $f\phi$.

    \item Suppose that the portrait of $f$ has two disjoint pre-periods. As above, let $r_i$ be the minimum $r_i$ such that $f^{r_i}(C_i)$ is periodic.  Let $\phi$ be a half twist that swaps $f^{r_1}(C_1)$ and $f^{r_2}(C_2)$.  Output $f\phi$, the marked set $M$, the portrait of $f\phi$, and the mapping class $\phi$.  Then the portrait of $f\phi$ has two pre-periodic components, one with a pre-period of length $r_1$ and a cycle of length $k_2$, and the other component with a pre-period of length $r_2$ and a cycle of length $k_1$.
    \begin{figure}
\begin{tikzpicture}
\node[label={[label distance=-4pt]90:\tiny $C_1$}] at (0,2) {$\ast$};
\node at (0.8,2) {$\cdots$};
\node at (1.7,2) {$\bullet$};
\node[label={[label distance=-4pt]135:{\tiny $f^{r_1}(C_1)$}}] at (2.4,2) {$\bullet$};
\node at (2.75,2.5) {$\bullet$};
\node at (3.5,2.75) {$\cdots$};
\node at (4.25,2.5) {$\bullet$};
\node[label={[label distance=-10pt]210:{\tiny$f^{r_2}(C_2)$}}] at (4.6,2) {$\bullet$};
\node at (4.25,1.5) {$\bullet$};
\node at (3.5,1.25) {$\cdots$};
\node at (2.75,1.5) {$\bullet$};
\node at (5.3,2) {$\bullet$};
\node at (6.05,2) {$\cdots$};
\node[label={[label distance=-4pt]90:\tiny $C_2$}] at (6.85,2) {$\ast$};
 \draw [-latex] (0.2,2) -- (0.5,2);
 \draw [-latex] (1.1,2) -- (1.5,2);
  \draw [-latex] (2.5,2.1) -- (2.7,2.4);
  \draw [-latex] (1.9,2) -- (2.3,2);
  \draw [-latex]  (2.8,2.6) -- (3.2,2.75);
  \draw [-latex]  (3.8,2.75) -- (4.2,2.6);
   \draw [-latex]  (4.4,2.4) -- (4.55,2.1);
   \draw [-latex]  (4.55,1.9) -- (4.4,1.6);
  \draw [-latex]  (4.2,1.4) -- (3.8,1.25);
  \draw [-latex]  (3.2,1.25) -- (2.8,1.4);
  \draw [-latex] (2.7,1.6) -- (2.5,1.9);
   \draw [-latex] (5.15,2) -- (4.7,2);
 \draw [-latex] (5.7,2) -- (5.4,2);
   \draw [-latex] (6.7,2) -- (6.3,2);

\node at (7.5,2) {\textcolor{blue}{$\mapsto$}};
     
\node[label={[label distance=-4pt]90:\tiny $C_1$}] at (8,2) {$\ast$};
\node at (8.8,2) {$\cdots$};
\node at (9.7,2) {$\bullet$};
\node[label={[label distance=-8pt]90:{\tiny $f^{r_1}(C_1)$}}] at (10.4,2) {$\bullet$};
\node at (10.75,2.5) {$\bullet$};
\node at (11.5,2.75) {$\cdots$};
\node at (12.25,2.5) {$\bullet$};
\node[label={[label distance=-8pt]-90:{\tiny $f^{r_2}(C_2)$}}] at (12.6,2) {$\bullet$};
\node at (12.25,1.5) {$\bullet$};
\node at (11.5,1.25) {$\cdots$};
\node at (10.75,1.5) {$\bullet$};
\node at (13.3,2) {$\bullet$};
\node at (14.05,2) {$\cdots$};
\node[label={[label distance=-4pt]90:\tiny $C_2$}] at (14.85,2) {$\ast$};
 \draw [-latex] (8.2,2) -- (8.5,2);
 \draw [-latex] (9.1,2) -- (9.5,2);
  \draw [-latex] (10.5,2) to (12.2,1.5);
  \draw [-latex] (9.9,2) -- (10.3,2);
  \draw [-latex]  (10.8,2.6) -- (11.2,2.75);
  \draw [-latex]  (11.8,2.75) -- (12.2,2.6);
   \draw [-latex]  (12.4,2.4) -- (12.55,2.1);
   \draw [-latex]  (12.5,2) -- (10.8,2.5);
  \draw [-latex]  (12.2,1.4) -- (11.8,1.25);
  \draw [-latex]  (11.2,1.25) -- (10.8,1.4);
  \draw [-latex] (10.7,1.6) -- (10.5,1.9);
   \draw [-latex] (13.15,2) -- (12.7,2);
 \draw [-latex] (13.7,2) -- (13.4,2);
   \draw [-latex] (14.7,2) -- (14.3,2);
\end{tikzpicture}

\caption{Function \ref{Function:Split}(4) may change all portrait features of $\Gamma_f$}\label{fig:split_disjoint}
\end{figure}

    \item Suppose that $C_1$ and $C_2$ are both strictly pre-periodic and their pre-periods are distinct and intersecting. Let $\ell_1>0,\ell_2\geq 0$ be such that $f^{\ell_1}(C_1) = f^{\ell_2}(C_2)$.  Let $\diamond$ denote a pre-image of the first critical point, $C_1$. Let $\phi$ be a half twist that swaps $\diamond$ with $f^{\ell_1-1}(C_1)$. Output $f\phi$, the marked set $M\cup\{\diamond\}$, the portrait of $f\phi$ (in this case not all points of $M$ are vertices of the portrait), and the mapping class $\phi$.  The portrait $f \phi$ has two components: $C_1$ is in a cyclic component of length $\ell_1$ and $C_2$ is in a component that is the same as the forward orbit of $C_2$ under $f$. 
\end{enumerate}
Function \ref{Function:Split} can change all portrait features of $f$.
\begin{figure}
\begin{tikzpicture}
\node[label={\footnotesize $C_1$}] at (1,3.75) {$\ast$};
\node at (1,3) {$\diamond$};
\node at (1.6,3.35) {$\bullet$};
\node at (2.4,2.95) {$\ddots$};
\node [label={[label distance=-8pt]20:$f^{s-1}(C_1)$}] at (3.2,2.4) {$\bullet$};
\node[label={\footnotesize $C_2$}] at (1,.25) {$\ast$};
\node at (1.6,.6) {$\bullet$};
\node at (2.4,1.2) {\reflectbox{$\ddots$}};
\node[label={[label distance=-10pt]340:$f^{r-1}(C_2)$}] at (3.2,1.6) {$\bullet$};
\node at (3.8,2) {$\bullet$};
\node at (4.7,2) {$\cdots$};
\node at (5.4,2) {$\bullet$};
\node at (5.75,2.5) {$\bullet$};
\node at (6.5,2.75) {$\cdots$};
\node at (7.25,2.5) {$\bullet$};
\node at (7.6,2) {$\bullet$};
\node at (7.25,1.5) {$\bullet$};
\node at (6.5,1.25) {$\cdots$};
\node at (5.75,1.5) {$\bullet$};

\draw[-latex] (1,3.15) -- (1,3.65);
\draw [-latex] (1.1,3.7) -- (1.45,3.45);
\draw [-latex] (1.7,3.3) -- (2.1,3);
 \draw [-latex] (2.7,2.7) -- (3.1,2.45);
 \draw [-latex] (3.3,2.35) -- (3.75,2.05);
  \draw [-latex] (2.7,1.3) -- (3.1,1.55);
  \draw [-latex] (3.3,1.65) -- (3.75,1.95);
  \draw [-latex] (1.1,.3) -- (1.45,.55);
\draw [-latex] (1.7,.68) -- (2.1,.94);
 \draw [-latex] (3.95,2) -- (4.4,2);
  \draw [-latex] (5.55,2.1) -- (5.7,2.4);
  \draw [-latex] (4.9,2) -- (5.3,2);
  \draw [-latex]  (5.8,2.6) -- (6.2,2.75);
  \draw [-latex]  (6.8,2.75) -- (7.2,2.6);
   \draw [-latex]  (7.4,2.4) -- (7.55,2.1);
   \draw [-latex]  (7.55,1.9) -- (7.4,1.6);
  \draw [-latex]  (7.2,1.4) -- (6.8,1.25);
  \draw [-latex]  (6.2,1.25) -- (5.8,1.4);
  \draw [-latex] (5.7,1.6) -- (5.5,1.9);
  \draw [ultra thick, blue, latex'-latex'] (1.12,3) -- (3.1,2.4);

\node at (8.5,2) {\textcolor{blue}{$\mapsto$}};
     
\node[label={\footnotesize $C_1$}] at (9,3.75) {$\ast$};
\node at (9,3) {$\diamond$};
\node at (9.6,3.35) {$\bullet$};
\node at (10.4,2.95) {$\ddots$};
\node[label={[label distance=-8pt]20:$f^{s-1}(C_1)$}] at (11.2,2.4) {$\bullet$};
\node[label={\footnotesize $C_2$}] at (9,.25) {$\ast$};
\node at (9.6,.6) {$\bullet$};
\node at (10.4,1.2) {\reflectbox{$\ddots$}};
\node[label={[label distance=-10pt]340:$f^{r-1}(C_2)$}] at (11.2,1.6) {$\bullet$};
\node at (11.8,2) {$\bullet$};
\node at (12.7,2) {$\cdots$};
\node at (13.4,2) {$\bullet$};
\node at (13.75,2.5) {$\bullet$};
\node at (14.5,2.75) {$\cdots$};
\node at (15.25,2.5) {$\bullet$};
\node at (15.6,2) {$\bullet$};
\node at (15.25,1.5) {$\bullet$};
\node at (14.5,1.25) {$\cdots$};
\node at (13.75,1.5) {$\bullet$};

\draw [-latex] (9.1,3.7) -- (9.45,3.45);
\draw [-latex] (9.7,3.3) -- (10.1,3);
 \draw [-latex] (10.7,2.7) -- (11.1,2.45);
 \draw [-latex] (11.25,2.4) to [out=90,in=30] (9.1,3.75);
  \draw [-latex] (10.7,1.3) -- (11.1,1.55);
  \draw [-latex] (11.3,1.65) -- (11.75,1.95);
  \draw [-latex] (9.1,.3) -- (9.45,.55);
\draw [-latex] (9.7,.68) -- (10.1,.94);
 \draw [-latex] (11.95,2) -- (12.4,2);
  \draw [-latex] (13.55,2.1) -- (13.7,2.4);
  \draw [-latex] (12.9,2) -- (13.3,2);
  \draw [-latex]  (13.8,2.6) -- (14.2,2.75);
  \draw [-latex]  (14.8,2.75) -- (15.2,2.6);
   \draw [-latex]  (15.4,2.4) -- (15.55,2.1);
   \draw [-latex]  (15.55,1.9) -- (15.4,1.6);
  \draw [-latex]  (15.2,1.4) -- (14.8,1.25);
  \draw [-latex]  (14.2,1.25) -- (13.8,1.4);
  \draw [-latex] (13.7,1.6) -- (13.5,1.9);
\end{tikzpicture}
  \caption{Function \ref{Function:Split}(5) may change all portrait features of $\Gamma_f$.}\label{fig:Split5}
  \end{figure}
\end{function}

\begin{proof}
(1) In (a), assume up to relabeling that $f(C_1)=C_2$.  As defined in the statement of the Function, $\phi$ swaps $C_1$ and $C_2$.  Then $f\phi$ maps $C_1$ to itself and maps $C_2$ to $f(C_2)=f^2(C_1)$.  Since $\phi$ can be chosen to be supported on a neighborhood of an arc between $C_1$ and $C_2$, $(f\phi)^\ell(C_1)=f^\ell(C_2)$ for all $2\leq \ell\leq k-1$.  In particular, the portrait of $f\phi$ has two components: one consisting of the fixed point $C_1$, the other is a cyclic component consisting of $\{C_2,f^2(C_1),\cdots,f^{k-1}(C_1)\}$.

In (b), then $\phi$ swaps $f^{q-1}(C_1)$ with $f^{k-1}(C_1)$.  Then $f\phi$ maps $f^{q-1}(C_1)$ to $C_1$ and maps $f^{k-1}(C_1)$ to $C_2$. Since $\phi$ can be chosen to be supported on a neighborhood of an arc between $f^{q-1}(C_1)$ and $f^{k-1}(C_1)$, $f\phi$ and $f$ are equal for all other post-critical points.  In particular, the portrait of $f\phi$ has two cyclic components, one that contains $\{C_1,f(C_1),\cdots,f^{q-1}(C_1)\}$ and one that contains $\{C_2=f^q(C_1),f^{q+1}(C_1),\cdots,f^{k-1}(C_1)\}$.

\medskip
(2) The branched cover $f\phi$ maps $f^{r-1}(C_1)$ to $C_1$.
Since $\phi$ can be chosen so that it is supported on a neighborhood of an arc containing $\diamond$ and $f^{r-1}(C_1)$, the maps $f$ and $f\phi$ agree on all critical and post-critical points except for $f^{r-1}(C_1)$.

\medskip
(3) The branched cover $f \phi$ maps $f^{q-1}(C_1)$ to $C_1$.
Since $\phi$ can be chosen so that it is supported on a neighborhood of an arc containing $\diamond$ and $f^{q-1}(C_1)$, the maps $f$ and $f\phi$ agree on all critical and post-critical points except for $f^{q-1}(C_1)$.  Then $C_1$ is periodic under $f\phi$ with period consisting of $\{C_1,f(C_1),\cdots,f^{q-1}(C_1)\}$.  The other component is pre-periodic and consists of $\{C_2,f^{q+1}(C_1),\cdots,f^{r+k-1}(C_1)\}$ where the pre-period is the intersection of the pre-periods of $C_1$ and $C_2$ under $f$ (which is exactly the pre-period of $C_2$ under $f$).

\medskip
(4) The branched cover $f\phi$ maps $f^{r_1}(C_1)$ to $f^{r_2+1}(C_2)$ and maps $f^{r_2}(C_2)$ to $f^{r_1+1}(C_1)$. 
Since $\phi$ can be chosen to be supported on the neighborhood of an arc between $f^{r_1}(C_1)$ and $f^{r_2}(C_2)$, $f\phi$ and $f$ are equal for all critical and post-critical points except $f^{r_1}(C_1)$ and $f^{r_2}(C_2)$. The portrait of $f\phi$ has two components: one has pre-period $\{C_1,f(C_1),\cdots,f^{r_1-1}(C_1)\}$ and cycle $\{f^{r_1}(C_1),f^{r_2+1}(C_2),\cdots, f^{r_2+k_2-1}(C_2)\}$.  The other has pre-period $\{C_2,f(C_2),\cdots,f^{r_2-1}(C_2)\}$ and cycle $\{f^{r_2}(C_2),f^{r_1+1}(C_1),\cdots, f^{r_1+k_1-1}(C_1)\}$.

\medskip
(5) The branched cover $f\phi$ maps $f^{\ell_1-1}(C_1)$ to $C_1$. 
Since $\phi$ can be chosen so that it is supported on a neighborhood of an arc containing $\diamond$ and $f^{\ell_1-1}(C_1)$, $f$ and $f\phi$ are equal on all critical and post-critical points except for $f^{\ell_1-1}(C_1)$. 
Then $C_1$ is periodic under $f\phi$ with period $\{C_1,f(C_1),\cdots,f^{\ell_1-1}(C_1)\}$. 
The other component of the portrait of $f\phi$ consists of the orbit of $C_2$ and the pre-period is the same as the pre-period of $C_2$ under $f$ and the cycle is the same as the cycle of $C_2$ under $f$.
\end{proof}

\begin{function}[Make a strictly pre-periodic critical point periodic]\label{Function:CriticalPointpre-periodToCycle}  Take as input a marked branched cover $f$ with marked set $M$ and associated portrait $\Gamma$ with two non-empty components such that (at least) one critical point is strictly pre-periodic. 
 Suppose the critical point $C_j$ is strictly pre-periodic under $f$.  Let $r$ be the length of the pre-period of $C_j$ under $f$, and let $k$ be the length of the cycle of $C_j$ under $f$.  Because $C_1$ and $C_2$ are in different components, neither critical point is the image of the other.  Therefore $f^{-1}(C_j)$ consists of two points, and at most one of them is post-critical.  Let $\diamond$ be a preimage of $C_j$ that does not lie in the post-critical set of $f$.  Let $\phi$ be a half twist that swaps $\diamond$ and $f^{r+k-1}(C_j)$ (the last point of the cycle of $C_j$ under $f$).  Output the marked branched cover $f\phi$, the marked set $M\cup\{\diamond\}$, the portrait of $f\phi$, and the mapping class $\phi$.  Then $C_j$ is in the cycle of $f\phi$ and the cycle has length $k+r$.   We refer to the change in the portrait that results from applying $\phi$ as ``applying Function \ref{Function:CriticalPointpre-periodToCycle} to $C_j$."
   The branched cover $f \phi$ does not change the number of portrait components of $\Gamma_f$.
\end{function}

\begin{proof}
The branched cover $f\phi$ maps $f^{r+k-1}(C_j)$ to $C_j$. 
Since $\phi$ can be chosen so that it is supported on a neighborhood of an arc between $\diamond$ and $f^{r+k-1}(C_j)$, the maps $f$ and $f\phi$ agree on all critical and post-critical points except $f^{r+k-1}(C_j)$.  
Then the cycle of $C_j$ under $f\phi$ is $\{f(C_j),\cdots,f^{r-1}(C_j),f^r(C_j),\cdots,f^{r+k-1}(C_j),C_j\}$ (where $\{f(C_j),\cdots,f^{r-1}(C_j)\}$ is the pre-period of $C_j$ under $f$ and $\{f^r(C_j),\cdots,f^{r+k-1}(C_j)\}$ is the cycle of $C_j$ under $f$).
In particular, the portrait of $f\phi$ and the portrait of $f$ have the same number of components.
\end{proof}

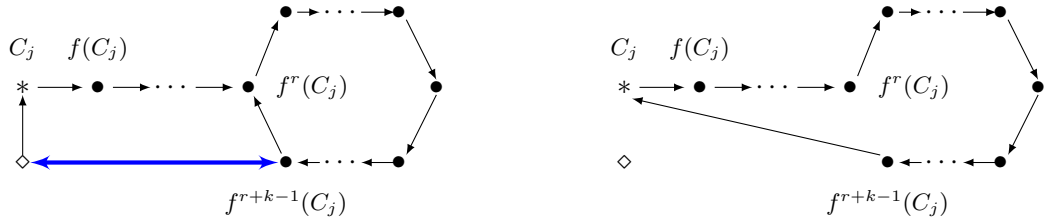
\begin{figure}
\begin{tikzpicture}\label{fig:pre-period_to_cycle}
\node[label={\footnotesize $C_j$}] at (1,2) {$\ast$};
\node at (1,1) {$\diamond$};
\node[label={\footnotesize $f(C_j)$}] at (2,2) {$\bullet$};
\node at (3,2) {$\cdots$};
\node[label=right:{\footnotesize $f^{r}(C_j)$}] at (4,2) {$\bullet$};
\node at (4.5,3) {$\bullet$};
\node at (5.25,3) {$\cdots$};
\node at (6,3) {$\bullet$};
\node at (6.5,2) {$\bullet$};
\node at (6,1) { $\bullet$};
\node at (5.25,1) {$\cdots$};
\node[label=below:{\footnotesize $f^{r+k-1}(C_j)$}] at (4.5,1) {$\bullet$};
\draw [-latex] (1,1.1) -- (1,1.9);
 \draw [-latex] (1.2,2) -- (1.8,2);
 \draw [-latex] (2.2,2) -- (2.7,2);
  \draw [-latex] (3.3,2) -- (3.8,2);
  \draw [-latex] (4.1,2.15) -- (4.4,2.9);
    \draw [-latex]  (4.65,3) -- (4.95,3);
  \draw [-latex]  (5.5,3) -- (5.95,3);
  \draw [-latex]  (6.1,2.9) -- (6.5,2.1);
    \draw [-latex]  (6.5,1.9) -- (6.1,1.1);
        \draw [-latex]  (4.44,1.12) -- (4.07,1.89);
    \draw [-latex]  (4.95,1) -- (4.65,1);
  \draw [-latex]  (5.95,1) -- (5.5,1);
  \draw[ultra thick, blue, latex'-latex'] (1.1,1) -- (4.4,1);

\node[label={\footnotesize $C_j$}] at (9,2) {$\ast$};
\node at (9,1) {$\diamond$};
\node[label={\footnotesize $f(C_j)$}] at (10,2) {$\bullet$};
\node at (11,2) {$\cdots$};
\node[label=right:{\footnotesize $f^{r}(C_j)$}] at (12,2) {$\bullet$};
\node at (12.5,3) {$\bullet$};
\node at (13.25,3) {$\cdots$};
\node at (14,3) {$\bullet$};
\node at (14.5,2) {$\bullet$};
\node at (14,1) { $\bullet$};
\node at (13.25,1) {$\cdots$};
\node[label=below:{\footnotesize $f^{r+k-1}(C_j)$}] at (12.5,1) {$\bullet$};
 \draw [-latex] (9.2,2) -- (9.8,2);
 \draw [-latex] (10.2,2) -- (10.7,2);
  \draw [-latex] (11.3,2) -- (11.8,2);
  \draw [-latex] (12.1,2.15) -- (12.4,2.9);
    \draw [-latex]  (12.65,3) -- (12.95,3);
  \draw [-latex]  (13.5,3) -- (13.95,3);
  \draw [-latex]  (14.1,2.9) -- (14.5,2.1);
    \draw [-latex]  (14.5,1.9) -- (14.1,1.1);
        \draw [-latex]  (12.4,1.1) -- (9.1,1.85);
    \draw [-latex]  (12.95,1) -- (12.65,1);
  \draw [-latex]  (13.95,1) -- (13.5,1);
\end{tikzpicture}
  \caption{Applying Function \ref{Function:CriticalPointpre-periodToCycle} to the critical point $C_j$ creates a new function in for which $C_j$ periodic.}
  \end{figure}

\begin{function}[Decrease a cycle length]\label{Function:DecreasePeriod}
Take as input a marked branched cover $f$ with marked set $M$ and portrait $\Gamma$.  We will assume that the portrait of $f$ consists of two cyclic components.  If the length of the cycle of $C_j$ is greater than one, let $f^\ell(C_j)$ and $f^{\ell+1}(C_j)$ be (distinct) post-critical points in the cycle of $C_j$.  The points $f^{\ell}(C_j)$ and $f^{\ell+1}(C_j)$ will not both be critical points because each critical point is in a different component.
    Then let $\phi$ be a half twist that swaps $f^\ell(C_j)$ and $f^{\ell+1}(C_j)$.  Output the marked branched cover $f\phi$, the marked set $M$, the portrait of $f\phi$, and the mapping class $\phi$.
    The cycle of $C_j$ in the portrait of $f\phi$ has length one less than the the length of the cycle of $C_j$ in the portrait of $f$.

The branched cover $f\phi$ only affects the length of the cycle, and no other portrait features of $\Gamma_f$.
 We refer to the change in the portrait that results from applying $\phi$ as ``applying Function \ref{Function:DecreasePeriod} to the point $f^\ell(C_j)$."
\end{function}
\begin{proof}
Precomposing  with a half twist which swaps $f^\ell(C_j)$ and $f^{\ell+1}(C_j)$ results in $f^{\ell}(C_j)$ mapping to $f^{\ell+2}(C_j)$ and $f^{\ell+1}(C_j)$ mapping to itself. Therefore the portrait for $f\phi$ has one less point than $\Gamma_f$.  
 Because $\phi$ is supported on a neighborhood of an arc between $f^\ell(C_j)$ and $f^{\ell+1}(C_1)$, no other cycles or pre-periods of $\Gamma_f$ change under the map $\phi$.  That is: the length of the cycle decreases by one and all other portrait features remain unchanged.
\end{proof}

\begin{proposition}
    The set of abstract portraits that are compatible with a quadratic branched cover are closed under Functions \ref{Function:Split}-\ref{Function:DecreasePeriod}.
\end{proposition}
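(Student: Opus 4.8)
The plan is to sidestep a case-by-case combinatorial verification across the many sub-cases of Functions \ref{Function:Split}--\ref{Function:DecreasePeriod} and instead exploit the fact that every function produces its output \emph{as the portrait of an honest marked branched cover} $f\phi$. First I would isolate the structural principle that does the work: if $g$ is any marked post-critically finite branched cover whose marked set contains $P_g\cup\{C_1,C_2\}$, then the abstract portrait built from $g$ on the vertex set $P_g\cup\{C_1,C_2\}$ automatically satisfies the five abstract-portrait axioms. Indeed, axiom (2) holds because $g$ is a function (one outgoing edge per vertex); axiom (3) holds because local degrees are positive integers that exceed $1$ exactly at critical points; axiom (4) holds because $P_g=\bigcup_{n\geq 1}g^n(C_g)$ consists precisely of the images of vertices; and axiom (5) holds because every post-critical point $g^n(c)$ is joined to $c\in C$ by the directed path $c\to g(c)\to\cdots\to g^n(c)$, so each component meets $C$. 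Thus the genuinely new content is only the three quadratic-compatibility conditions.

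Next I would verify that $f\phi$ is itself a quadratic branched cover. Since $\phi\in\Mod(S^2,N_f)$ is represented by an orientation-preserving homeomorphism, it has degree $1$, so $f\phi$ is a branched cover of the same degree as $f$, namely $2$. Moreover $f\phi$ fails to be a local homeomorphism exactly on $\phi^{-1}(\{C_1,C_2\})$, which is a two-point set (as $\phi$ is injective) on which $f\phi$ has local degree $2$. Hence $f\phi$ has exactly two critical points, each of local degree $2$, so each outgoing edge from $C$ is labeled $2$; and because $f\phi$ has degree $2$, for every point $y$ the local degrees over $(f\phi)^{-1}(y)$ sum to $2$, whence the labels of the marked edges terminating at any vertex sum to at most $2$. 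These are exactly the three conditions for the portrait of $f\phi$ to be compatible with a quadratic branched cover.

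The one remaining bookkeeping step is to confirm that in each function the output marked set $M'$ (which is $M$, or $M\cup\{\diamond\}$ when a preimage $\diamond$ of a critical point is adjoined) contains $P_{f\phi}\cup\{C_1,C_2\}$ and is forward invariant under $f\phi$, so that the portrait of $f\phi$ is genuinely defined on its post-critical-plus-critical vertex set. This is precisely what the individual proofs of Functions \ref{Function:Split}--\ref{Function:DecreasePeriod} already establish: each swap is chosen so that $f\phi(M')\subset M'$, and the orbits computed there are finite, so $f\phi$ remains post-critically finite with the stated finite portrait. Combining the three steps, the portrait of $f\phi$ satisfies the abstract-portrait axioms together with the three quadratic-compatibility conditions, which is the claim.

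I expect the main obstacle to be one of care rather than of difficulty. One must check that adjoining a new preimage $\diamond$ does not secretly violate the ``labels terminating at a vertex sum to at most $2$'' condition at the point $\diamond$ maps to, and that the relabeling of which marked point is called a critical point — which occurs when a half twist moves a critical point, as in the degenerate $r=1$ situation of Function \ref{Function:Split}(2) — does not affect compatibility. Both concerns dissolve under the degree-$2$ argument above, since compatibility refers only to the existence of a two-point critical set and to a global degree bound, neither of which is sensitive to how the two critical points are labeled.
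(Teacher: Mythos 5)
Your proposal is correct, and it takes a genuinely different route from the paper's proof. The paper argues combinatorially: after dispensing with the abstract-portrait axioms (as you do, on the grounds that they serve to define the vertex and critical sets of $\Gamma_{f\phi}$), it reduces the third compatibility condition to the claim that no function swaps a critical point with the origin of an edge whose terminus already receives another edge, and then verifies this by casework over Functions \ref{Function:Split}--\ref{Function:DecreasePeriod} (the split cases swap either two critical points, two non-critical points, or a vertex with a non-vertex; Function \ref{Function:CriticalPointpre-periodToCycle} adjoins a non-vertex; Function \ref{Function:DecreasePeriod} is only applied to two cyclic components, where no vertex has two incoming edges). You instead derive all three compatibility conditions at once from the topology: $\phi$ is a homeomorphism, so $f\phi$ is again a degree-$2$ branched cover with exactly two critical points of local degree $2$, and the local degrees over any fiber sum to exactly $2$, so the labels of portrait edges terminating at any vertex sum to at most $2$. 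This is a clean and valid argument --- it is essentially the observation the paper itself makes in Section \ref{sec:portaits_feature} ("the sum of the labels of the arrows that terminate at a vertex cannot exceed the degree of the branched cover"), applied to $f\phi$ --- and it subsumes the paper's casework, since a critical point in a fiber of a degree-$2$ cover already exhausts that fiber. What your approach buys is uniformity and immunity to the bookkeeping errors the paper's enumeration is vulnerable to; what the paper's approach buys is a purely combinatorial verification that would survive in a setting where the portrait operations are given abstractly, without an underlying branched cover realizing them. Your reliance on the individual function proofs for post-critical finiteness and forward invariance of the output marked set is legitimate, since the proposition concerns exactly the outputs of those functions.
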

\begin{proof}
   Let $f$ be an abstract portrait compatible with a quadratic branched cover and let $\phi$ be a homeomorphism used in one of Functions \ref{Function:Split}-\ref{Function:DecreasePeriod}.  We claim that $f\phi$ is compatible with a quadratic branched cover.  First we note that conditions (1)-(5) in the definition of abstract portrait are used to determine which points in the marked set of $f\phi$ are in the portrait $\Gamma_{f\phi}$ and which points are distinguished as critical points in $\Gamma_{f\phi}$. Because these serve to define the points, they do not need to be verified. 
   
   There are three properties required for $\Gamma_{f\phi}$ to be compatible with a quadratic branched cover: it has exactly two points distinguished as critical points, the edges originating at the points distinguished as critical points are labeled with a 2, and the sum of the labels of edges that terminate at each vertex are at most 2.  The first property follows from the fact that $\phi$ is a homeomorphism.  The second property holds because none of the functions change the labels of edges: edges originating at points distinguished as critical points will be labeled with a 2 and all other edges will be labeled with a 1 (even if the points distinguished as critical points are different).  To ensure the third property is satisfied, we must check that for any point that is the terminus of two edges $e_1$ and $e_2$, that no function swaps the origin of $e_1$ or $e_2$ with a critical point.

   Function \ref{Function:Split} has six cases, but all cases either swap two critical points, two non-critical points, or swap a point that is not a vertex of $\Gamma$ with a point that is a vertex of $\Gamma$.  Therefore no cases of Function \ref{Function:Split} swap a critical point with a non-critical origin of an edge in $\Gamma$.  Function \ref{Function:CriticalPointpre-periodToCycle} adds a point that is not in $\Gamma$ to the portrait, and therefore do not swap a critical point with the origin of any edge in $\Gamma$.  The homeomorphism in Function \ref{Function:DecreasePeriod} swaps two post-critical points, but is only applied under the assumption that the components are both cycles and therefore has no points that are the terminus of two edges.  
\end{proof}

\section{Algorithm}\label{sec:algorithm}
Let $g,h:S^2\rightarrow S^2$ be a post-critically finite branched covers with portrait $\Gamma_g$ and $\Gamma_h$, respectively.  Let $\Gamma$ denote the portrait of $p(z)=z^2$, which consists of the two fixed critical points 0 and $\infty$.  The algorithm below describes a sequence of portraits $\Gamma^0,\cdots,\Gamma^n$ with $\Gamma_g=\Gamma^0$ and $\Gamma=\Gamma^n$.  For each step of the algorithm, we compare the portrait $\Gamma^i$ to the portrait $\Gamma$ and apply one of the functions from Section \ref{sec:functions} to obtain the next portrait $\Gamma^{i+1}$.  Then we repeat the algorithm for $h$.  The algorithm tracks the marked set in each step and the mapping class that is composed in each step.  Note that in each step, the domain is a sphere, but the marked set may be a proper superset of the marked set in the previous step.  We are able to compose mapping classes from each step, possibly by applying capping homomorphisms to mapping classes from some steps \cite{FM1}.  The final marked set $M$ is the union of the marked set in each step of the algorithm for both $g$ and $h$.
The final mapping class $\phi$ is the composition of the mapping classes from each step of the algorithm for $g$ and the inverse of the mapping class from each step of the algorithm for $h$.

\p{Components of $\Gamma_g$, $\Gamma_h$, and $\Gamma$} Because a portrait of a quadratic branched cover can have at most two (non-empty) components, we will consider each portrait as a pair of (possibly empty) components $\Gamma_g=(\Gamma^g_1,\Gamma^g_2)$, $\Gamma_h=(\Gamma^h_1,\Gamma^h_2)$, and $\Gamma=(\Gamma_1,\Gamma_2)$.  If one component is empty, we will always label the empty component as the second entry.  Moreover, we will say that a portrait has {\it one component} if it has one non-empty component and one empty component.   

\p{The algorithm} Steps \ref{step:number}- \ref{step:cycle_length} below give a sequence of portraits $\Gamma^i=\{(\Gamma^i_1,\Gamma^i_2)\}$ so that $\Gamma^{i+1}_j$ differs from $\Gamma^i_j$ by a function from Section \ref{sec:functions} 
and there exists $n\geq 1$, so that $\Gamma^n=\Gamma$.

We first initialize the algorithm with the pair $(\Gamma^0,f_0)=(\Gamma_g,g)$, the set $M_0$ consisting of $P_g$ and the critical points of $g$, the identity mapping class $\phi$, and the sequence (at this stage consisting of the single entry) $\{(\Gamma^0,f_0)\}$.  After completing the algorithm for $g$, we initialize the algorithm with the pair $(\Gamma^0,f_0)=(\Gamma_h,h)$, the set $M_0$ consisting of $P_h$ and the critical points of $h$, the identity mapping class $\phi$, and the (single entry) sequence $\{(\Gamma_0,f_0)\}$.

\medskip

\begin{step}[Number of components]\steplabel{step:number}
Take as input the pair $(\Gamma^i,f_i)$, the marked set $M_i$, a mapping class $\phi_i$, and the sequence $\{(\Gamma^0,f_0),(\Gamma^1,f_1),\cdots,(\Gamma^i,f_i)\}$.  As above, each $\Gamma^i$ is a pair of (possibly empty) components.

 If $\Gamma^i$ has two non-empty components, return $(\Gamma^i,f_i)$, the marked set $M_i$, the mapping class $\phi_i$, and the sequence $\{(\Gamma^0,f_0),(\Gamma^1,f_1),\cdots,(\Gamma^i,f_i)\}$, and continue to Step \ref{step:pre-period_structure2}.

If $\Gamma^i$ has one component, then we apply Function \ref{Function:Split}.
In particular, if the component of $\Gamma^i$ is a cycle, apply Function \ref{Function:Split} (1).
If the component of $\Gamma^i$ has one pre-period, apply Function \ref{Function:Split} (2).  If $\Gamma^i$ has two pre-periods where one is contained in the other, apply Function \ref{Function:Split} (3).
If $\Gamma^i$ is a portrait which has two disjoint pre-periods, apply Function \ref{Function:Split} (4).
If $\Gamma^i$ is a portrait which has two pre-periods which are distincting and intersecting, apply Function \ref{Function:Split} (5).
Let $(f_{i+1},M_{i+1},\Gamma_{i+1},\phi_{i+1})$ be the output of Function \ref{Function:Split}.  Return $(f_{i+1},M_{i+1},\Gamma_{i+1},\phi_{i}\circ\phi_{i+1},\{ (\Gamma^0,f_0), \cdots, (\Gamma^{i+1},f_{i+1})\})$.
Continue to Step \ref{step:pre-period_structure2}. 
\end{step}

\begin{step}[Pre-period structure]\steplabel{step:pre-period_structure2}
Take as input the pair $(\Gamma^i,f_i)$, the marked set $M_i$, a mapping class $\phi_i$, and the sequence $\{(\Gamma^0,f_0),(\Gamma^1,f_1),\cdots,(\Gamma^i,f_i)\}$.  
Let $\Gamma^i=(\Gamma_1^i,\Gamma_2^i)$ and $\Gamma=(\Gamma_1,\Gamma_2)$.  
We may assume that $\Gamma^i$ has two non-empty components because Step \ref{step:pre-period_structure2} is only called in this case.  
Let $C_1^i$ and $C_2^i$ be the critical points of $f_i$ and, as above.

If both components of $\Gamma^i$ are cycles, continue to Step \ref{step:cycle_length}.

 If $C_1^i$ is strictly pre-periodic under $f_i$, then apply Function \ref{Function:CriticalPointpre-periodToCycle} to $C_1^i$.  
Let $(f_{i+1},M_{i+1},\Gamma_{i+1},\phi_{i+1})$ be the output of Function \ref{Function:CriticalPointpre-periodToCycle}.  Return $(f_{i+1},M_{i+1},\Gamma_{i+1},\phi_{i}\circ\phi_{i+1},\{ (\Gamma^0,f_0), \cdots, (\Gamma^{i+1},f_{i+1})\})$.  Continue to Step \ref{step:pre-period_structure2}.

 If $C_2^i$ is strictly pre-periodic under $f_i$, then apply Function \ref{Function:CriticalPointpre-periodToCycle} to $C_2^i$. Let $(f_{i+1},M_{i+1},\Gamma_{i+1},\phi_{i+1})$ be the output of Function \ref{Function:CriticalPointpre-periodToCycle}.  Return $(f_{i+1},M_{i+1},\Gamma_{i+1},\phi_{i}\circ\phi_{i+1},\{ (\Gamma^0,f_0), \cdots, (\Gamma^{i+1},f_{i+1})\})$. Continue to Step \ref{step:pre-period_structure2}.
\end{step}

\begin{step}[Cycle lengths]\steplabel{step:cycle_length}
Take as input the pair $(\Gamma^i,f_i)$, the marked set $M_i$, a mapping class $\phi_i$, and the sequence $\{(\Gamma^0,f_0),(\Gamma^1,f_1),\cdots,(\Gamma^i,f_i)\}$.  We may assume that $\Gamma^i$ and $\Gamma$ have two cyclic components (by Steps \ref{step:number} and \ref{step:pre-period_structure2}).

Since $\Gamma^i$ has two components, $C_1^i$ and $C_2^i$ lie in separate portrait components. Let $k^i_j$ be the length of the cycle of the component $\Gamma^i_j$ of $\Gamma^i$.  

    If $k^i_1 = 1$ and $k^i_2=1$, then the algorithm terminates.  Return $(\Gamma^i, f_i)$, the marked set $M_i$, the mapping class $\phi_i$, and the sequence $\{ (\Gamma^0,f_0), (\Gamma^1,f_1), \cdots, (\Gamma^i,f_i) \}$.
    
\noindent Otherwise, at least one of $k^i_1 >1$ or $k^i_2 >1$.

\hspace{\parindent}If $k^i_1 > 1$ apply Function \ref{Function:DecreasePeriod} to a non-critical point in the cycle of $\Gamma_1^i$.  Let $(f_{i+1},M_{i+1},\Gamma_{i+1},\phi_{i+1})$ be the output of Function \ref{Function:DecreasePeriod}(1).  Return $(f_{i+1},M_{i+1},\Gamma_{i+1},\phi_{i}\circ\phi_{i+1},\{ (\Gamma^0,f_0), \cdots, (\Gamma^{i+1},f_{i+1})\})$.  Repeat Step \ref{step:cycle_length}.

If $k^i_2 > 1$ apply Function \ref{Function:DecreasePeriod} to a non-critical point in the cycle of $\Gamma_2^i$. Let $(f_{i+1},M_{i+1},\Gamma_{i+1},\phi_{i+1})$ be the output of Function \ref{Function:DecreasePeriod}(1).  Return $(f_{i+1},M_{i+1},\Gamma_{i+1},\phi_{i}\circ\phi_{i+1},\{ (\Gamma^0,f_0), \cdots, (\Gamma^{i+1},f_{i+1})\})$.  Repeat Step \ref{step:cycle_length}.

\end{step}

\section{Proof of Theorem \ref{thm:main}}\label{sec:proof}
We apply the algorithm in Section \ref{sec:algorithm} to the functions $g$ and $h$.  Each application returns a 5-tuple $(\Gamma_n,f_n,M_n,\phi_n,\{(\Gamma^0,f_0),(\Gamma^1,f_1),\cdots,(\Gamma^n,f_n)\})$, where $f_n$ is equivalent to $z^2$ and $\Gamma^n$ is the portrait for $z^2$.  Let $M$ be the union of the sets $M_n$ obtained when applying the algorithm to $g$ and to $h$.  Let $\phi_n^g$ be the mapping class from applying the algorithm to $g$ and let $\phi_n^h$ be the mapping class from applying the algorithm to $h$.  The mapping class $\phi_n^g\circ(\phi_n^h)^{-1}\in\Mod(S^2,M)$ in the desired mapping class in Theorem \ref{thm:main}.  Before we prove the Theorem \ref{thm:main}, we need to prove that the algorithm terminates in a finite number of steps (hence proving that $M$ is finite).
\begin{lemma}\label{Lem:EachStepFinite}
Let $g:S^2\rightarrow S^2$ be a marked quadratic branched cover with dynamical portrait $\Gamma_g$.  Let $\Gamma$ be the portrait for $p(z)=z^2$.  The algorithm that runs by calling Step \ref{step:number} (which in turn calls either Step \ref{step:pre-period_structure2}  and \ref{step:cycle_length}) terminates in a finite number of steps.
\end{lemma}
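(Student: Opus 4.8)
The plan is to show that the algorithm makes strictly decreasing progress toward the portrait of $z^2$, so that it cannot loop forever. The natural approach is to define a \emph{complexity} (or monovariant) associated to each portrait $\Gamma^i$ that takes values in a well-ordered set and show that each call to Step~\ref{step:number}, Step~\ref{step:pre-period_structure2}, or Step~\ref{step:cycle_length} either strictly decreases this complexity or advances the algorithm to a later step that is itself reached only finitely often. Since the target portrait $\Gamma$ (two fixed critical points) is the unique minimal element, reaching complexity zero forces termination.

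I would organize the argument around the three-phase structure of the algorithm and bound each phase separately, exploiting the fact that the steps are called in a fixed order (Step~\ref{step:number} then Step~\ref{step:pre-period_structure2} then Step~\ref{step:cycle_length}) and that the functions only ever move the portrait ``forward'' in this ordering. First I would observe that Step~\ref{step:number} is called at most once in a way that changes the portrait: Function~\ref{Function:Split} is applied only when $\Gamma^i$ has one component, and by the closure proposition the output has two components; since no subsequent function (Function~\ref{Function:CriticalPointpre-periodToCycle} preserves the number of components, and Function~\ref{Function:DecreasePeriod} is applied only to two-cycle portraits) ever decreases the number of components back to one, the splitting happens at most once. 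Next, for Step~\ref{step:pre-period_structure2}, I would track the total pre-period length $r_1 + r_2$ (the number of strictly pre-periodic vertices). Each application of Function~\ref{Function:CriticalPointpre-periodToCycle} to a strictly pre-periodic critical point $C_j$ converts its entire pre-period into cycle vertices, strictly decreasing the number of components with a strictly pre-periodic critical point; since there are at most two such critical points and the function preserves the two-component structure without creating new pre-periods, this step is invoked at most twice before both components are cyclic. Finally, for Step~\ref{step:cycle_length}, the monovariant is the sum $k_1 + k_2$ of the two cycle lengths: Function~\ref{Function:DecreasePeriod} strictly decreases one cycle length by one while leaving the other fixed, and the step terminates precisely when $k_1 = k_2 = 1$, so it runs for exactly $(k_1 - 1) + (k_2 - 1)$ iterations, a finite number.

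Assembling these bounds, the whole algorithm performs at most one splitting, at most two pre-period-to-cycle conversions, and finitely many cycle-length decreases, so it halts in finitely many steps with the portrait of $z^2$. The cleanest packaging is a lexicographic complexity $(\,c,\ p,\ k\,)$, where $c$ is the number of components remaining to be split (i.e.\ $1$ if the portrait has one component and $0$ otherwise), $p = r_1 + r_2$ is the total pre-period length, and $k = k_1 + k_2$ is the total cycle length, ordered lexicographically on $\{0,1\} \times \Z_{\geq 0} \times \Z_{\geq 0}$; I would verify that each function strictly decreases this triple and that the triple $(0,0,2)$, corresponding to two fixed points, is the termination value.

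The main obstacle I anticipate is not the counting itself but carefully checking that each function behaves monotonically with respect to the chosen complexity in \emph{all} of its cases — in particular the six cases of Function~\ref{Function:Split}, some of which (cases (4) and (5), illustrated in Figures~\ref{fig:split_disjoint} and~\ref{fig:Split5}) can alter pre-period and cycle lengths in complicated ways. I would need to confirm that none of these cases increases the lexicographically-earlier coordinates: splitting must not re-introduce a single component, and the pre-period data produced by the split must be of the form consumed by Step~\ref{step:pre-period_structure2} so that the subsequent conversions terminate. Verifying that the functions are applied under the stated hypotheses (so that, e.g., Function~\ref{Function:DecreasePeriod} is only ever called when both components are cyclic, guaranteeing the existence of a non-critical point to twist) is the delicate bookkeeping that makes the monovariant argument go through.
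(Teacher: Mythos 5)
Your proposal is correct and takes essentially the same approach as the paper: both arguments bound the number of iterations of each of the three steps separately (the split happens at most once per map, the pre-period-to-cycle conversion at most once per critical point, and the cycle-length reduction exactly $(k_1-1)+(k_2-1)$ times), relying on the fact that later functions never undo the progress made by earlier ones. Your lexicographic monovariant $(c,p,k)$ is just a tidier packaging of the paper's direct count of how many times each step runs.
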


\begin{proof}
    For each step, we calculate the maximum number of times the step runs in order to obtain an upper bound on the steps required for the algorithm to run.

    \textbf{Step \ref{step:number}} \textit{Runs twice:} once for $g$ and once for $h$.

    \textbf{Step \ref{step:pre-period_structure2}} \textit{runs at most six times}.  The step runs at most once for each component of both $g$ and $h$, plus a final time (for each $g$ and $h$) to verify that both components are cycles: a total of six times.

    \textbf{Step \ref{step:cycle_length}:}
We consider the number of times Step \ref{step:cycle_length} is run for both $g$ and $h$.
 Let $(\Gamma^i,f_i)$ be the pair returned from Step \ref{step:pre-period_structure2} when the algorithm is run for $g$ and let $(\Delta^i,f_i)$ be the pair returned from step \ref{step:pre-period_structure2} when the algorithm is run for $h$.
    Let $k_j^g$ be the cycle length of $\Gamma^i_j$ (when the algorithm is run for $g$) and $k_j^h$ be the cycle length of $\Delta_i^j$ (when the algorithm is run for $h$).  Then Step \ref{step:cycle_length} runs $k_1^g+k_2^g-1$ for $g$.  Indeed, Step \ref{step:cycle_length} runs $k_1^g-1$ times for the first component, $k_2^g-1$ times for the second component and one additional time to check that the lengths are correct.  A similar calculation shows that Step \ref{step:cycle_length} runs $k_1^h+k_2^h-1$ for $h$.  Hence Step \ref{step:cycle_length} runs a total of $k_1^g+k_2^g+k_1^h+k_2^h-2$ times.

    \textbf{Conclusion:} Therefore, we see that the algorithm runs each step a finite number of times, which is what we aimed to show. 
\end{proof}

The final ingredient is a more general restatement of \cite[Lemma 3.6]{KL}.  Let $Q(n)$ be the set of quadratic branched covers with $n$ post-critical points.  The proof is identical to that of Kelsey--Lodge, by replacing the set $Q(4)^*$ with $Q(n)$.
\begin{lemma}\label{lem:uniqueness}
    Let $f,g\in Q(n)$.  Then $g,h$ are pure Hurwitz equivalent if and only if they have equivalent portraits.
\end{lemma}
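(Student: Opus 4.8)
The statement is a biconditional, so I would argue the two directions separately: the forward implication (pure Hurwitz equivalent $\Rightarrow$ isomorphic portraits) is essentially formal, while the converse (realization) carries all the content.

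For the forward direction, suppose the two maps $f,g$ are pure Hurwitz equivalent, witnessed by orientation-preserving homeomorphisms $\psi_1,\psi_2$ with $\psi_1 f=g\psi_2$, $\psi_i(M_f)=M_g$, and $\psi_1|_{P_f}=\psi_2|_{P_f}$. Since $\psi_2$ is a homeomorphism it preserves local degree, hence carries $C_f$ bijectively onto $C_g$ and preserves all edge labels. I claim $\psi_2$ restricted to the vertex set $C_f\cup P_f$ is a portrait isomorphism. Indeed, for any vertex $v$ its image $f(v)$ lies in $P_f$, where $\psi_1=\psi_2$; therefore $\psi_2(f(v))=\psi_1(f(v))=g(\psi_2(v))$, so $\psi_2$ intertwines $f$ and $g$ and thus sends the directed edge out of $v$ to the directed edge out of $\psi_2(v)$. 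Iterating this identity shows $\psi_2$ carries forward orbits of critical points to forward orbits of critical points, so $\psi_2(P_f)=P_g$, and $\psi_2$ is a label- and $C$-preserving graph isomorphism $\Gamma_f\to\Gamma_g$. The only use of purity here is exactly that $\psi_1=\psi_2$ on $P_f$, which supplies edge-compatibility.

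For the converse, suppose a portrait isomorphism $\eta\colon\Gamma_f\to\Gamma_g$ is given; on vertices it restricts to a bijection $\beta$ carrying $C_f$ to $C_g$, carrying the two critical values of $f$ to the two critical values of $g$, and satisfying $\beta f=g\beta$ on $P_f$. I would first realize $\beta$ on the target sphere: since $\Homeo^+(S^2)$ acts transitively on configurations of distinct labeled points, choose $\psi_1\in\Homeo^+(S^2)$ with $\psi_1|_{P_f}=\beta$. By Riemann--Hurwitz a degree-$2$ branched cover $S^2\to S^2$ has exactly two simple critical points, and its two critical values are distinct (two local-degree-$2$ preimages of one value would force degree $4$); thus $f$ and $g$ are each branched over two points, with monodromy the nontrivial element of $\Z/2$ around each. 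Such a cover is classified by the unique surjection $\pi_1(S^2\setminus\{\text{two points}\})\to\Z/2$ sending every meridian to $1$, so, because $\psi_1$ matches the branch locus of $f$ with that of $g$, it conjugates the monodromy of $f$ to that of $g$ and hence lifts to an orientation-preserving $\psi_2$ with $\psi_1 f=g\psi_2$ and $\psi_2(M_f)=M_g$.

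The pair $(\psi_1,\psi_2)$ now realizes a combinatorial equivalence, but to conclude \emph{pure} Hurwitz equivalence I must further arrange $\psi_2|_{P_f}=\beta$, and this is the step I expect to be the crux. From $\psi_1 f=g\psi_2$ and $\psi_1|_{P_f}=\beta$ one gets, for each $p\in P_f$, that $g(\psi_2(p))=\beta(f(p))=g(\beta(p))$, so $\psi_2(p)$ is either $\beta(p)$ or its image under the deck involution $\iota_g$ of $g$; at points whose $g$-image is a critical value, and in particular at critical points, the fiber is a single point and the two agree. The difficulty is that the two global lifts of $\psi_1$ differ by $\iota_g$ applied \emph{simultaneously} at every point, so one lift need not produce $\beta$ at all of the non-critical post-critical points at once. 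To repair this I would precompose $\psi_1$ with point-pushing maps along loops that each enclose a single branch point: such a push returns $\psi_1|_{P_f}$ to $\beta$ while toggling the sheet selected by the lift at the affected post-critical point, and these toggles can be applied until $\psi_2|_{P_f}=\beta$. Once $\psi_1|_{P_f}=\psi_2|_{P_f}=\beta$ and $\psi_i(M_f)=M_g$, the pair $(\psi_1,\psi_2)$ witnesses pure Hurwitz equivalence. This is precisely the Kelsey--Lodge argument \cite[Lemma 3.6]{KL} with $Q(4)^*$ replaced by $Q(n)$; the cardinality of the post-critical set enters only through its finiteness.
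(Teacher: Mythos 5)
Your proof is correct and follows the same route the paper takes: the paper gives no argument of its own for this lemma, stating only that the proof is that of Kelsey--Lodge \cite[Lemma 3.6]{KL} with $Q(4)^*$ replaced by $Q(n)$, which is exactly the argument you reconstruct (formal forward direction; for the converse, realize the vertex bijection by $\psi_1$, lift through the uniquely-determined degree-two cover, and correct the lift on fibers by pushes around a single branch point). Your write-up actually supplies more detail than the paper does; the only point left implicit is that within each fiber $g^{-1}(\beta(f(p)))$ the lift is either correct at both preimages or wrong at both (by injectivity of $\psi_2$ and $\beta$), so the fiberwise toggles are independent and the repair terminates.
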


Finally, we are able to prove Theorem \ref{thm:main}.

\begin{proof}[Proof of Theorem \ref{thm:main}]
Let $g$ and $h$ be the branched covers indicated in the theorem.  Let $\Gamma_g$ be the portrait of $g$ and let $M_g$ be the marked set of $g$. 
 Let $\Gamma$ be the portrait of $z^2$.  We first run the algorithm by inputting $(\Gamma_g,g)$, the marked set $M_g$, the identity mapping class, and the sequence $\{(\Gamma_g=\Gamma_0,g=f_0)\}$ into Step \ref{step:number}.  Let $(\Gamma_n,f_n,M_n^f,\phi_n^f,\{(\Gamma^0,f_0),(\Gamma^1,f_1),\cdots,(\Gamma^n,f_n)\})$ be the return of Step \ref{step:cycle_length} when the cycle terminates.  By construction $\phi_i^g\in\Mod(S^2,M_i^g)$ for all $0\leq i\leq n$.  In particular, $\phi_n^g\in\Mod(S^2,M_n^g)$. Next we prove that $g\circ\phi^f_n$ has the same portrait as $z^2$.

 First we observe that $\Gamma^{1}$ and $\Gamma$ both have two components by applying  Step \ref{step:number}.  Steps \ref{step:pre-period_structure2} and \ref{step:cycle_length} call Functions\ref{Function:CriticalPointpre-periodToCycle} and \ref{Function:DecreasePeriod}, neither of which changes the number of components of $\Gamma^i$. 
 
Next, let $i_2$ be the index of the final iteration of Step \ref{step:pre-period_structure2} (that calls Step \ref{step:cycle_length}).  Observe that $\Gamma^{i_2}$ has two components that are each cycles, just like $\Gamma$.  The final iteration of Step \ref{step:pre-period_structure2} verifies that $\Gamma^{i_2}$ and $\Gamma$ have the same pre-period structure before moving to Step \ref{step:cycle_length}.  Step \ref{step:cycle_length} calls Function \ref{Function:DecreasePeriod}, which does not change pre-period structure of $\Gamma^i$.

 The final iteration of Step \ref{step:cycle_length} ensures that  $\Gamma^{n}_f$ consists of two components, both of which are cycles of length 1.  This is the same portrait as $\Gamma$.

 We repeat the process for $h$ by inputting $(\Gamma_h,h)$, the marked set $M_h$, the identity mapping class and the sequence $\{(\Gamma_h=\Gamma_0,h=f_0)\}$ into Step \ref{step:number}.  The final iteration of Step \ref{step:cycle_length} will output a sequence $(\Gamma_m,f_m,M_m^h,\phi_m^h,\{(\Gamma^0,f_0),(\Gamma^1,f_1),\cdots,(\Gamma^m,f_m)\})$.  Similarly to $g$, we we obtain the mapping class $\phi_m^h\in\Mod(S^2,M_m^h)$.  The final portrait is the same as $\Gamma$.

 Then $g\circ\phi_n^g$ and $h\circ\phi_m^h$ both have portraits isomorphic to $\Gamma$.  Let $\phi=\phi_n^g\circ(\phi_m^h)^{-1}\in\Mod(S^2, M_n^g\cup M_m^h)$.  Then $g\circ\phi=(g\circ \phi_n^g)\circ(\phi_m^h)^{-1}$ has portrait isomorphic to the portrait of $h$.

 Finally, because the portraits of $g\circ\phi$ and $h$ are equivalent, by Lemma \ref{lem:uniqueness} (cf. \cite[Lemma 3.6]{KL}), $g\circ\phi$ and $h$ are pure Hurwitz equivalent.
\end{proof}

\bibliographystyle{plain}
\bibliography{transitivity}
\end{document}